\DeclareMathOperator\Q{\mathbb Q}
\DeclareMathOperator\R{\mathbb R}
\DeclareMathOperator\Z{\mathbb Z}
\newcommand{\Om}{\Omega}
\newcommand{\G}{{\mathbb G}}
\newtheorem{theorem}{Theorem}[section]
\newtheorem{lemma}[theorem]{Lemma}
\newtheorem{cor}[theorem]{Corollary}
\newtheorem{prop}[theorem]{Proposition}
\theoremstyle{definition}
\newtheorem{definition}[theorem]{Definition}
\theoremstyle{remark}
\newtheorem{remark}[theorem]{Remark}
\newcommand{\dontprint}[1]\relax
\newcommand{\be}{{\mathbf e}}
\newcommand{\bq}{{\mathbf q}}
\newcommand{\Coh}{\operatorname{Coh}}
\newcommand{\La}{\Lambda}
\renewcommand{\P}{{\mathbb P}}
\newcommand{\A}{{\mathbb A}}
\newcommand{\wt}{\widetilde}
\newcommand{\ot}{\otimes}
\newcommand{\Hom}{\operatorname{Hom}}
\newcommand{\Ext}{\operatorname{Ext}}
\newcommand{\CC}{{\mathcal C}}
\newcommand{\OO}{{\mathcal O}}
\newcommand{\UU}{{\mathcal U}}
\newcommand{\si}{\sigma}
\newcommand{\de}{\delta}
\newcommand{\sub}{\subset}
\newcommand{\Spec}{\operatorname{Spec}}
\newcommand{\Nm}{\operatorname{Nm}}
\newcommand{\lan}{\langle}
\newcommand{\ran}{\rangle}
\newcommand{\ov}{\overline}
\newcommand{\om}{\omega}
\newcommand{\la}{\lambda}
\renewcommand{\a}{\alpha}
\newcommand{\rk}{\operatorname{rk}}
\newcommand{\eps}{\epsilon}
\newcommand{\Pic}{\operatorname{Pic}}
\newcommand{\PGL}{\operatorname{PGL}}
\newcommand{\PSL}{\operatorname{PSL}}
\newcommand{\sspan}{\operatorname{span}}
\newcommand{\vareps}{\varepsilon}
\numberwithin{equation}{section}
\title{Exceptional pairs on del Pezzo surfaces and spaces of compatible Feigin-Odesskii brackets}
\author{Alexander Polishchuk}
\address{
    Department of Mathematics, 
    University of Oregon, 
    Eugene, OR 97403, USA; and National Research University Higher School of Economics, Moscow, Russia
  }
  \email{apolish@uoregon.edu}
\author{Eric Rains}
\address{Department of Mathematics, California Institute of Technology, Pasadena CA 91125}
\email{rains@caltech.edu}
\begin{document}

\begin{abstract} 
We prove that for every relatively prime pair of integers $(d,r)$ with $r>0$, there exists an exceptional pair $(\OO,V)$ on any del Pezzo surface of degree $4$,
such that $V$ is a bundle of rank $r$ and degree $d$. As an application, we prove that every Feigin-Odesskii Poisson bracket on a projective space can be included into
a $5$-dimensional linear space of compatible Poisson brackets. We also construct new examples of linear spaces of compatible Feigin-Odesskii Poisson brackets
of dimension $>5$, coming from del Pezzo surfaces of degree $>4$.
\end{abstract}

\maketitle

\section{Introduction}

We work over an algebraically closed ground field of arbitrary characteristic. 
%$\k$.

Recall that a del Pezzo surface is a smooth projective surface $X$ with ample anticanonical divisor. For a divisor class $D$ on $X$ we set
$\deg(D)=D\cdot (-K)$, and for a vector bundle $V$ on $X$ we set $\deg(V)=\deg(c_1(V))$. The degree of $X$ is $\deg(-K)=(-K)^2$.

\medskip

\noindent
{\bf Theorem A}. {\it For any del Pezzo surface $X$ of degree $4$ and any pair of relative prime numbers $(d,r)$, where $r>0$, there
exists an exceptional pair $(\OO_X,V)$ on $X$ with $V$ an exceptional bundle of rank $r$ and $\deg(V)=d$.}

\medskip

In fact, we present in Theorem \ref{main-thm} a construction of a family of pairs $(\OO_X,V)$ for given $(d,r)=(\deg(V),\rk(V))$, depending on a way to present a degree $4$ del Pezzo surface
$X$ as a blow up of $\P^1\times\P^1$.
%(we call such a way, together with a numbering of exceptional divisors, a {\it blowdown structure}). 
Let us say that an exceptional pair $(\OO_X,V)$ is {\it sporadic} if it is not of the form considered in Theorem \ref{main-thm} (for any way of presenting $X$ as a blow up of $\P^1\times\P^1$). We prove that sporadic pairs appear only when $|d|$ is not too large compare to $r$ (with an explicit quadratic bound).
A more precise statement is given in Theorem \ref{sporadic-thm}. 

For the proof of both Theorem A and Theorem \ref{sporadic-thm} we use crucially the action of the 
Weyl group of type $D_5$ on the Picard group of a degree $4$ del Pezzo surface $X$ (see \ref{blowdown-sec}). We also use, following a referee's suggestion\footnote{Our original proof used a complicated inductive procedure, involving del Pezzo surfaces of degree $1$ and some computations in the $E_8$ lattice.}, fully faithful embeddings of
the derived categories of certain weighted projective lines $\CC_4$ and $\CC_5$ into $D^b(\Coh X)$ (see Sec.\ \ref{C5-conn-sec} and \ref{C4-conn-sec}) and some results on
exceptional bundles over weighted projective lines (see Sec.\ \ref{weighted-sec}). Here $\CC_n$ denotes a stacky projective line with $n$ points of weight $2$ (i.e.,
having $\mu_2$ as automorphism groups). Our Theorem \ref{sporadic-thm} translates into a statement that in a certain range of rank and degree
every exceptional bundle on $\CC_5$ comes from $D^b(\Coh \CC_4)$ contained in $D^b(\Coh \CC_5)$ as the right orthogonal to one of the exceptional sheaves supported at stacky points
on $\CC_5$ (see Proposition \ref{C5-omnipresent-prop}).

For del Pezzo surfaces of degree $k\ge 5$, we show that the possible pairs $(d=\deg(V),r=\rk(V))$ for an exceptional pair $(\OO,V)$ have to satisfy inequality
\begin{equation}\label{d2-krd-ineq}
d^2-krd+kr^2\ge -k,
\end{equation}
with equality for $k=9$.
In particular, for $k\ge 5$ not all relatively prime pairs occur (see Proposition \ref{qu-ineq-prop}). We also prove in Proposition \ref{higher-deg-dP-prop} realizability in the case 
$5\le k\le 8$ (resp., $k=9$) of pairs satisfying
$$-k\le d^2-krd+kr^2\le -1,$$
(resp., $d^2-9rd+9r^2=-9$) with some caveats for $k=8$. Note that in the case $k=9$ our equality implies that $3|d$. 
% and with the additional assumption $3|d$ for $k=9$ (in this case the assumption $3|d$ implies that the inequality \eqref{d2-krd-ineq}
%becomes an equality).

Our interest in exceptional pairs of the form $(\OO,V)$ on del Pezzo surfaces is due to their relevance in the theory of Feigin-Odesskii Poisson brackets on projective spaces.
Recall that for a simple vector bundle $E$ of rank $r$ and degree $d>0$ on an elliptic curve $C$ (or its degeneration), one has a natural Poisson bracket on $\P H^0(C,E)^*$,
constructed in terms of the geometry of vector bundles on $C$ (see \cite{FO}, \cite{P-ell}). This Poisson bracket (well defined up to proportionality) 
depends only on the elliptic curve $C$ and the discrete invariants $(d,r)$ of $E$. We will refer to it as {\it FO bracket} and denote it by $q_{d,r}(C)$.

It was observed in \cite{HP-bih} (by generalizing earlier construction of Odesskii-Wolf \cite{OW})
that if $(\OO_X,V)$ is an exceptional pair on a del Pezzo surface $X$ then for every anticanonical divisor $C\sub X$ the restriction map
$H^0(X,V)\to H^0(C,V|_C)$ is an isomorphism, and the Poisson brackets on $\P H^0(X,V)^*\simeq \P^{d-1}$ (where $d=h^0(V)=\deg(V)$) 
coming from $V|_C$ for various anticanonical divisors form a vector
space of compatible Poisson brackets (which means that the Schouten bracket between each pair of the corresponding bivectors is zero).
In the case $r=1$ this gives $9$-dimensional linear subspaces of compatible FO-brackets on $\P^{d-1}$ containing any given FO-bracket of type $(d,1)$. 
For $r>1$, \cite{HP-bih} gives some partial results on compatible FO-brackets of type $(d,r)$.

\medskip

\noindent
{\bf Theorem B}. {\it (i) For any elliptic curve $C$ and any relatively prime positive numbers $(d,r)$, with $d>r+1$, the FO bracket $q_{d,r}(C)$ on $\P^{d-1}$ is contained in a $5$-dimensional 
linear subspace of compatible FO brackets on $\P^{d-1}$ of type $(d,r)$. If $r=2$ or $r=d-2$ then $q_{d,r}(C)$ is contained in a $6$-dimensional linear 
subspace of compatible FO brackets on $\P^{d-1}$ of type $(d,r)$.

\noindent (ii) For $5\le k\le 9$ and a relatively prime pair $(d,r)$ such that $d>r+1$ and 
$$-k\le d^2-krd+kr^2\le -1,$$
where $3|d$ in the case $k=9$, any FO-bracket $q_{d,r}(C)$ on $\P^{d-1}$ is contained in a $(k+1)$-dimensional
linear subspace of compatible FO brackets on $\P^{d-1}$ of type $(d,r)$.

\noindent (iii) Any linear subspace of compatible Poisson brackets on $\P^{d-1}$, whose generic element is a nonzero FO bracket of type $(d,r)$, lifts uniquely
to a linear subspace of compatible Poisson bracket on the affine space $\A^d$.
}

\medskip

The proof uses results of \cite{HP-bih}, our study of exceptional pairs $(\OO,V)$ on del Pezzo surfaces, and the results on existence of
non-isotrivial anticanonical pencils passing through a given anticanonical curve in a del Pezzo surfaces, established in Sec.\ \ref{pencils-sec}
(some of them are most likely known to the experts).

More precisely, given an exceptional pair $(\OO,V)$ on a del Pezzo surface of degree $k$, with $V$ of degree $d$ and rank $r$, the construction of \cite{HP-bih} provides
$k+1$ compatible FO-brackets on $\P^{d-1}$ of type $(d,r)$, and gives a criterion for their linear independence. Theorem A constructs such an exceptional pair for every slope $d/r$ 
on a degree $4$ del Pezzo surface. Although some slopes are achievable on higher degree del Pezzo surfaces, $k=4$ turns out to be the largest degree for which {\em all} slopes are achievable.  Moreover, even if we had only been interested in surfaces of lower degree, it would still be easier to work with degree 4, as for ``most" slopes (all but finitely many of any given rank) there is an essentially unique pair of that slope; this fails in lower degree.  This criticality of degree 4 appears in other ways; see Remark \ref{crit-rem}.

Note that Theorem B does not give the maximal number of compatible FO-brackets for all pairs $(d,r)$. For example, there are well known Poisson isomorphism 
$q_{d,r}(C)\simeq q_{d,r'}(C)$ for $rr'\equiv 1\mod(d)$ (see \cite{P-ell}), and in this way we see that $q_{7,3}(C)\simeq q_{7,5}(C)$ and $q_{7,4}(C)\simeq q_{7,2}(C)$ can be included into $6$-dimensional spaces of compatible FO brackets, which cannot be seen directly from Theorem B.

\bigskip

\noindent
{\it Acknowledgments}. We are grateful to the anonymous referee for many useful comments and especially for suggesting a more conceptual approach to studying exceptional pairs
$(V,\OO)$ on a del Pezzo surface of degree $4$, using the connection with weighted projective lines.
This material is based upon work supported by the National Science Foundation under grant No. DMS-1928930 and by the Alfred P. Sloan Foundation under grant G-2021-16778, while both authors were in residence at the Simons Laufer Mathematical Sciences Institute (formerly MSRI) in Berkeley, California, during the Spring 2024 semester. In addition, A.P. is partially supported by the NSF grant DMS-2349388, by the Simons Travel grant MPS-TSM-00002745,
and within the framework of the HSE University Basic Research Program. A.P. is also grateful to the IHES, where part of this work was done, for hospitality and excellent working 
conditions.

\section{Preliminaries}

\subsection{Exceptional objects on del Pezzo surfaces}

Let $X$ be a del Pezzo surface, and let $Q=-K$ denote the anticanonical divisor on $X$. For an exceptional bundle $V$, its degree $\deg(V)=c_1(V)\cdot Q$ is
relatively prime to the rank (since the restriction to a smooth anticanonical divisor is a simple bundle on an elliptic curve, by \cite[Lem.\ 3.6]{KO}).
It is known that an exceptional bundle on $X$ is slope stable with respect to $Q$ (see \cite{Gor}) and is uniquely determined by its class in $K_0$: the latter fact follows from stability (see \cite[Prop.\ 5.14]{Rains}).
% or from the case of elliptic surfaces (\cite[Prop.\ 5.18]{Rains}). 

We will use the Hirzebruch-Riemann-Roch theorem on $X$ in the form
$$\chi(V,V')=-\rk(V)\rk(V')+\rk(V)\chi(V')+\chi(V)\rk(V')-c_1(V)\cdot (c_1(V')+\rk(V')Q)
$$
(see \cite[(4.5)]{Rains}). 
For an exceptional object $V$ we have $\chi(V,V)=1$, so for such $V$,
\begin{equation}\label{chi-V-eq}
\chi(V)=\frac{r^2+c_1(V)\cdot (c_1(V)+rQ)+1}{2r},
\end{equation}
where $r=\rk(V)$.

\begin{lemma}\label{dP-lem}
(i) Let $F$ be an exceptional object in $D^b(\Coh X)$. Then either $F\simeq V[n]$, where $V$ is an exceptional bundle, or
$F\simeq \OO_R(m)[n]$, where $R$ is a $(-1)$-curve.

%\noindent
%(ii) Let $V$ be an exceptional vector bundle on $X$, $R\sub X$ a $(-1)$-curve. Then the splitting type of $V|_R$ is tight, i.e., 
%$$V|_R\simeq \OO(s)^a\oplus \OO(s+1)^b,$$
%for some $s\in \Z$, $a,b\ge 0$.

\noindent
(ii) Let $\pi:X\to X'$ be the blow down of a collection of non-intersecting $(-1)$-curves, and let $V$ be an exceptional vector bundle
on $X$ such that $c_1(V)\in \pi^*\Pic(X')$. Then $V\simeq \pi^*W$ for an exceptional vector bundle on $X'$.
\end{lemma}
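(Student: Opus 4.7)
For (i), I would first reduce to the case where $F$ is a sheaf (up to shift). This is a standard consequence of exceptionality: the spectral sequence $E_2^{p,q} = \bigoplus_i \Ext^p(\HH^i F, \HH^{i+q} F) \Rightarrow \Ext^{p+q}(F, F)$, together with the vanishing of $\Ext^\bullet(F,F)$ outside degree $0$, forces $F$ to have a single nonzero cohomology sheaf $\HH^n F = E$, so $F \simeq E[n]$. Next split $E$ via $0 \to T \to E \to G \to 0$ into torsion and torsion-free parts; standard $\Ext$ bookkeeping shows that either $T = 0$ or $G = 0$. In the torsion-free case, the cokernel of $E \hookrightarrow E^{\vee\vee}$ is supported in codimension $2$, and exceptionality forces it to vanish, so $E$ is locally free. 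In the torsion case, simplicity $\End(E) = k$ together with the decomposition by support components forces $E \simeq \iota_* \LL$ for a line bundle $\LL$ on an integral curve $R \sub X$; the vanishing $\Ext^1(E,E) = 0$ gives $H^0(R, \NN_{R/X}) = 0$, and adjunction then identifies $R$ as a smooth rational curve with $R^2 = -1$, i.e., a $(-1)$-curve, with $\LL \simeq \OO_R(m)$ for some $m$.

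For (ii), the plan is to compute $\Ext^1_X(V, V|_R)$ in two ways and show it vanishes. On one hand, the local-to-global spectral sequence and local freeness of $V$ give $\Ext^1_X(V, V|_R) = H^1(R, \EE nd(V|_R))$, which vanishes precisely when $V|_R \simeq \bigoplus_i \OO(a_i)$ has tight splitting $|a_i - a_j| \le 1$. On the other hand, applying $\Hom(V, -)$ to the short exact sequence $0 \to V(-R) \to V \to V|_R \to 0$ and using $\Ext^1(V, V) = \Ext^2(V, V) = 0$ yields $\Ext^1(V, V|_R) \simeq \Ext^2(V, V(-R))$, which by Serre duality equals $\Hom(V, V(K_X + R))^\vee$. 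Since $R$ is a $(-1)$-curve, $(K_X + R) \cdot (-K) = 1 - \deg(X) \le 0$, and using the stability of exceptional bundles with respect to $-K$ recalled from \cite{Rains}, a nonzero map $V \to V(K_X + R)$ between stable bundles of equal rank and strictly smaller slope is impossible when $\deg(X) \ge 2$.

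For (iii), I would apply (ii) to each $(-1)$-curve $E_i$ contracted by $\pi$. The hypothesis $c_1(V) \in \pi^* \Pic(X')$ forces $c_1(V) \cdot E_i = 0$; combined with the tight splitting $V|_{E_i} \simeq \OO(s)^a \oplus \OO(s+1)^b$ with $a + b = r$, the constraint $sa + (s+1)b = 0$ leaves only $(s, a, b) \in \{(0, r, 0), (-1, 0, r)\}$, and in both cases $V|_{E_i} \simeq \OO_{E_i}^r$ is trivial. Triviality on each contracted exceptional curve is the classical descent condition for the birational contraction $\pi$: the sheaf $W := \pi_* V$ is locally free, the counit $\pi^* W \to V$ is an isomorphism, and exceptionality of $W$ follows from the full faithfulness of $\pi^* : D^b(\Coh X') \to D^b(\Coh X)$. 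The main obstacle, as I see it, is the stability-based vanishing in part (ii), especially handling del Pezzos of degree $1$ where $(K_X + R) \cdot (-K) = 0$ and an isomorphism $V \simeq V(K_X + R)$ cannot be ruled out by slope alone.
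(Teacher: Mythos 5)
The paper itself offers no argument here: all three parts are quoted directly from Kuleshov--Orlov \cite{KO} (Prop.\ 2.9, 2.10 for (i), Lem.\ 3.1 for (ii), Cor.\ 3.2 for (iii)), so your proposal should be measured against those proofs. Your plans for (ii) and (iii) are correct and are essentially the standard arguments: for (ii), the identification $\Ext^1_X(V,V|_R)\simeq H^1(R,(V^\vee\ot V)|_R)$, the restriction triangle, Serre duality, and slope stability of exceptional bundles with respect to $-K$; for (iii), triviality of $V|_{E_i}$ from tightness plus $c_1(V)\cdot E_i=0$, then descent and full faithfulness of $\pi^*$. The ``main obstacle'' you flag in degree $1$ is not one: a nonzero map $V\to V(K_X+R)$ between stable bundles of equal slope and equal rank would be an isomorphism, and taking determinants would give $r(K_X+R)=0$ in $\Pic(X)$, which is impossible since $(K_X+R)\cdot R=-2$ and $\Pic(X)$ is torsion free. (You do need this case: the paper applies part (iii) on degree-$1$ surfaces $X_1$.)

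The genuine gap is in part (i). The hypercohomology spectral sequence does not ``force $F$ to have a single nonzero cohomology sheaf.'' What it gives, for the extreme indices $a>b$ of nonvanishing cohomology, is that each $\HH^iF$ is rigid (the term $E_2^{1,0}$ survives to $E_\infty$ inside $\Ext^1(F,F)=0$), that $\Hom(\HH^aF,\HH^bF)=0$ (from $E_\infty^{0,b-a}$), and that $\Ext^2(\HH^bF,\HH^aF)=0$ (from $E_\infty^{2,a-b}$). None of these vanishings is absurd for arbitrary nonzero sheaves, so no contradiction follows at this point; one must invoke the structure theory special to del Pezzo surfaces --- rigid sheaves decompose into direct sums of exceptional sheaves, exceptional bundles are stable, and for the resulting summands one of $\Hom(A,B)$, $\Ext^2(B,A)$ must be shown nonzero --- which is exactly the content of \cite[Prop.\ 2.9]{KO} that your sentence compresses away. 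Similarly, in the torsion case, ``decomposition by support components'' plus simplicity does not rule out a simple rigid sheaf whose support is connected but non-reduced (a sheaf on a thickened curve), reducible, or on which the sheaf has rank $\ge 2$; establishing $E\simeq\iota_*\LL$ with $R$ integral, and then smooth rational with $R^2=-1$, requires a separate argument (and your reduction of the torsion-free case to local freeness of $E^{\vee\vee}/E=0$ is likewise a standard but not one-line computation with $c_2$). These are precisely the places where the cited results of \cite{KO} do real work, so as written part (i) is a statement of the answer rather than a proof.
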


\begin{proof}
(i) See \cite[Prop.\ 2.9, 2.10]{KO}.

%\noindent
%(ii) See \cite[Lem.\ 3.1]{KO}.

\noindent
(ii) See \cite[Cor.\ 3.2]{KO}.
%It is enough to consider the case when $\pi$ blows down a single $(-1)$-curve $R$. Then it is enough to check that
%$R\Hom(V,\OO_E(-1))=0$, or equivalently $V|_R\simeq \OO^r$. But we know that $c_1(V)\cdot R=0$, so $\deg(V|_R)=0$.
%Now using the form of $V|_R$ from (ii), we get $as+b(s+1)=0$, with $a>0$, $b\ge 0$. But this is possible only for $s=b=0$.
%s=-(s+1)b/a, so s+1\neq 0, s/(s+1)=-b/a, if s\neq 0 get that s/s+1<0.
\end{proof}

\subsection{Action of the Weyl group on geometric markings}\label{blowdown-sec}

Let $X$ be a del Pezzo surface of degree $4$.
Given a representation of $X$ as an iterated blow up of $\P^2$, we get an isomorphism of lattices
\begin{equation}\label{phi-lattice-isom}
\phi:I^{1,5}\to \Pic(X),
\end{equation}
where $\Pic(X)$ is equipped with the intersection form, and $I^{1,5}=\bigoplus_{i=0}^5 \Z \be_i$ with $\be_0\cdot \be_0=1$, $\be_i\cdot \be_i=-1$ for $i>0$,
$\be_i\cdot \be_j=0$ for $i\neq j$ (see \cite[Ch.\ 8]{Dolg}). Namely, $\phi(\be_0)=h$, the pull-back of $\OO_{\P^2}(1)$ on $\P^2$, while for $i=1,\ldots,5$, 
$\phi(\be_i)=e_i$, the classes of exceptional curves.
Such an isomorphism $\phi$ is called a {\it geometric marking} of $X$, and the corresponding basis $(h,e_1,\ldots,e_5)$ of $\Pic(X)$ a {\it geometric basis}.

One has $\phi(\bq)=Q=-K_X$, where $\bq=3\be_0-\sum_{i=1}^5 \be_i$, and the sublattice $\lan \bq\ran^\perp\sub I^{1,5}$, with the form $(v,w)=-v\cdot w$,
is identified with the $D_5$ root lattice. More precisely, we choose this identification so that the simple roots are sent by $\phi$ to
%$$\a_1\mapsto e_5-e_1, \ \ \a_2\mapsto 
$$\a_1\mapsto e_1-e_2, \ \ \a_2\mapsto e_2-e_3, \ \ \a_3\mapsto e_3-e_4, \ \ \a_4\mapsto e_4-e_5, \ \ \a_5\mapsto h-e_1-e_2-e_3.$$ 
Furthermore, the group of isometries of $I^{1,5}$ fixing $\bq$ is isomorphic to the Weyl group $W(D_5)$ with its natural action on the root lattice $\lan \bq\ran^\perp$
(see \cite[Sec.\ 8.2]{Dolg}).
For computations with the action of $W(D_5)$ it is convenient to use the following orthonormal basis of $\lan Q\ran^\perp\sub \Pic(X)_{\Q}$
with respect to the negative of the intersection form (over $\Q$):
\begin{equation}\label{eps-i-geom-eq}
\eps_i=\frac{1}{2}(h-\sum_{i=1}^5 e_i)+e_i, \ \ i=1,\ldots,5,
\end{equation}
so that $\phi(\a_i)=\eps_i-\eps_{i+1}$ for $i=1,\ldots,4$ and $\phi(\a_5)=\eps_4+\eps_5$.
 The action of $W(D_5)$ is generated by permutations of $(\eps_i)$, as well as by the sign changes of even number of $\eps_i$.
 One has
 $$h=\frac{3}{4}Q-\frac{1}{2}\sum_{i=1}^5 \eps_i, \ \ e_j=\frac{Q}{4}+e_j-\frac{1}{2}\sum_{i=1}^5\eps_i, \ j=1,\ldots,5.$$
 
Thus, we have a natural right action of the group $W(D_5)$ on the set of lattice isomorphisms \eqref{phi-lattice-isom}: the action of $w\in W(D_5)$ sends 
$\phi$ to $\phi\circ w$.
%, where we extend $w$ to a lattice automorphism $I^{1,5}$ fixing $\bq$.
An important fact we need is that this $W(D_5)$-action preserves the set of geometric markings (see \cite[Sec.\ 8.2.8]{Dolg}). Equivalently, the natural action of the 
group of isometries of $\Pic(X)$ fixing $Q$ (isomorphic to $W(D_5)$) on bases of $\Pic(X)$ sends a geometric basis to a geometric basis.
 
We can also represent $X$ as a blow up of $\P^1\times \P^1$ at $4$ points, which gives a basis of $\Pic(X)$ of the form
\begin{equation}\label{blowdown-structure}
s,f,E_1,E_2,E_3,E_4
\end{equation}
where $E_i$ are classes of exceptional divisors (we use upper case to distinghuish from the basis obtained from the blow up of $\P^2$), 
and $s$ and $f$ are the classes of the two rulings on $\P^1\times\P^1$, so that
$s\cdot f=1$,  $E_i^2 = -1$,
and all other intersections are zero. 
We will refer to such bases \eqref{blowdown-structure} as {\it $\P^1\times\P^1$-type geometric bases}.

\begin{lemma} The action of the group of isometries of $\Pic(X)$ fixing $Q$ sends a $\P^1\times\P^1$-type geometric basis to
a $\P^1\times\P^1$-type geometric basis.
\end{lemma}

\begin{proof}
Let $\a:\Pic(X)\to \Pic(X)$ be such an isometry. Then $E'_1=\a(E_1),\ldots,E'_4=\a(E_4)$ are classes of disjoint $(-1)$-curves (see \cite[Sec.\ 8.2.6]{Dolg}), therefore, they can be contracted.
Let $X'$ denote the corresponding blown down surface. The classes $s'=\a(s)$ and $f'=\a(f)$ form a basis of $\Pic(X')$, so that $2s'+2f'=Q-E'_1-\ldots-E'_4$ is the anticanonical
class of $X'$. Since $(s')^2=(f')^2=0$ and $s'\cdot f'=1$, it follows that $X'\simeq \P^1\times \P^1$, and $s'$ and $f'$ are the classes of the rulings.
\end{proof}

We associate a $\P^1\times\P^1$-type geometric basis $(s,f,E_1,\ldots,E_4)$ with a geometric basis $(h,e_1,\ldots,e_5)$ by setting
\begin{equation}\label{s-f-marking-eq}
s=h-e_4, \ \ f=h-e_5, \ \ E_1=e_1, \ \ E_2=e_2, \ \ E_3=e_3, \ \ E_4=h-e_4-e_5
\end{equation}
%s=h-e_4=\frac{Q}{2}-\eps_4
%f=h-e_5=\frac{Q}{2}-\eps_5
%$$s=h-e_2, \ \ f=h-e_1, \ \ e'_1=e_5, \ \ e'_2=e_4, \ \ e'_3=e_3, \ \ e'_4=h-e_1-e_2.$$
%Then the simple roots of $D_5$ are expressed as follows:
%\begin{equation}\label{D5-roots-s-f}
%\a_1=s-f, \ \ \a_2=f-e'_4-e'_3, \ \ \a_3=e'_3-e'_2, \ \ \a_4=e'_2-e'_1, \ \ \a_5=e'_4-e'_3.
%\end{equation}

\subsection{Transformations of exceptional pairs $(V,\OO)$}

%\subsection{Stuff on degree $1$ del Pezzo}

%Let $X_1$ be a degree $1$ del Pezzo surface. Given an exceptional pair $(V_1,V_2)$ on $X_1$, we can transform it in the following ways:

%(a) we can "rotate", $(V_1,V_2)\mapsto (V_2\otimes\omega,V_1)$, or the inverse transformation;

%(b) we can mutate;

%(c) we can dualize.

%The combinations of those operations that fix $V_1=\OO$ act on the slope of $V_2$ as the subgroup of $\GL_2(\Z)$ that permutes $0,1,\infty$.  In particular, the set of slopes for which there exists a pair is a union of orbits.  Since every orbit not containing $\infty$ meets $[0,1)$, we reduce to that case, and any exceptional bundle with slope in that range 
%such that $\chi(V,O)=0$ automatically fits into an exceptional pair.

Let $X$ be a del Pezzo surface of degree $k$.
We have the following three natural operation on exceptional pairs $(V_1,V_2)$ on $X$: 

(1) duality $(V_1,V_2)\mapsto (V_2^\vee, V_1^\vee)$;

(2) mutation $(V_1,V_2)\mapsto (L_{V_1}V_2,V_1)$, where the {\it left mutation} $L_{V_1}V_2$ of $V_2$ fits into an exact triangle
$$L_{V_1}V_2\to R\Hom(V_1,V_2)\ot V_1\to V_2\ldots;$$

(3) rotation $(V_1,V_2)\mapsto (V_2(-Q),V_1)$.

Combining these operations we get the following two operations on exceptional pairs of the form $(V,\OO)$:
\begin{align}\label{mu-transf-eq}
&(V,\OO)\mapsto (M(V):=L_{\OO}(V^\vee),\OO), \nonumber \\
&(V,\OO)\mapsto (R(V):=V^\vee(-Q),\OO).
\end{align}

We can calculate the effect of these operations on the slope of $V$ (and on other invariants of $V$).

\begin{lemma}\label{M-R-lem}
One has
\begin{align*}
&\mu(M(V))=-\frac{\mu}{\mu+1},\\
&\mu(R(V))= -k-\mu,
\end{align*}
where $\mu=\mu(V)$.
Furthermore, 
$$\rk(M(V))=-\deg(V)-\rk(V), \ \ \rk(R(V))=\rk(V),$$
$$c_1(M(V))=c_1(V), \ \ c_1(R(V))=-c_1(V)-\rk(V)Q.$$
\end{lemma}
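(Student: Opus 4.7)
The plan is to verify each formula by a direct computation, handling the easy case $R(V) = V^\vee(-Q)$ first and then using the $K_0$-class of the mutation triangle to handle $M(V) = L_\OO(V^\vee)$. The only nontrivial ingredient is the observation that, because $(V,\OO)$ is an exceptional pair, one has $\chi(V) = 0$.

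For $R(V) = V^\vee(-Q)$ the statement is a routine use of the formulas $\rk(W(L)) = \rk W$ and $c_1(W(L)) = c_1(W) + \rk(W)\cdot L$: this immediately gives $\rk(R(V))=r$ and $c_1(R(V)) = -c_1(V) - rQ$, so $\deg(R(V)) = -c_1(V)\cdot Q - r Q^2 = -d - kr$, and dividing by $r$ yields $\mu(R(V)) = -\mu - k$.

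The interesting operation is $M(V)=L_\OO(V^\vee)$. I would start from the defining triangle
\begin{equation*}
L_\OO(V^\vee) \to R\Hom(\OO,V^\vee)\otimes\OO \to V^\vee,
\end{equation*}
which in $K_0(X)$ gives $[M(V)] = \chi(\OO,V^\vee)\,[\OO] - [V^\vee] = \chi(V^\vee)\,[\OO] - [V^\vee]$. Now the key step: by definition of an exceptional pair $(V,\OO)$ one has $\Ext^*(\OO,V) = H^*(X,V) = 0$, hence $\chi(V) = 0$. Combining this with the Hirzebruch--Riemann--Roch formula of the preliminary subsection (applied to $\chi(V,\OO) = \chi(V^\vee)$) one obtains $\chi(V^\vee) = \chi(V) - c_1(V)\cdot Q = -d$. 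Substituting back yields $[M(V)] = -d\,[\OO]-[V^\vee]$, so $\rk(M(V)) = -d-r$ and $c_1(M(V)) = c_1(V)$. The slope formula follows by the arithmetic $\mu(M(V)) = d/(-d-r) = -\mu/(\mu+1)$.

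There is no real obstacle here; the only subtlety is to notice that the vanishing of $\chi(V)$ (which is what makes the final formulas clean) is forced by the exceptional pair condition and not an independent assumption. The rest of the computation is bookkeeping with ranks, Chern classes, and the identity $Q^2=k$.
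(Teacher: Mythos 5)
Your computation is correct, and since the paper states Lemma \ref{M-R-lem} without proof, your direct verification is exactly the routine calculation the authors intend: the $K_0$-class identity $[L_{\OO}(V^\vee)]=\chi(\OO,V^\vee)[\OO]-[V^\vee]$ together with $\chi(V^\vee)=\chi(V)-c_1(V)\cdot Q$ and $Q^2=k$. You also correctly isolate the one non-obvious input, namely that the exceptional pair condition on $(V,\OO)$ forces $\chi(V)=H^\bullet(X,V)=0$ and hence $\chi(V^\vee)=-\deg(V)$, which is what makes the formulas for $M(V)$ close up.
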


%In the case $k=1$, we can consider the composition 
%\begin{equation}\label{S-def-eq}
%S(V):=MRM(V).
%\end{equation}
%Then Lemma \ref{M-R-lem} implies that 
%\begin{equation}\label{S-prop-eq}
%\mu(S(V))=\frac{1}{\mu(V)}, \ \ \rk(S(V))=\deg(V), \ \ c_1(S(V))=-c_1(V)+(\deg(V)+\rk(V))Q.
%\end{equation}

\begin{remark}\label{crit-rem}
%For general $k$, 
Note that both $M$ and $R$ are involutions acting by fractional-linear transformation on the slope $\mu$, and thus generate a 
dihedral subgroup of $\PGL_2(\R)$ (infinite or finite).  
The generating rotation (i.e., the action of $RM$ in $\PSL_2(\R)$) is elliptic for $k<4$, hyperbolic for $k>4$, and parabolic for $k=4$.  For $k>4$, the open interval between the two fixed points of $RM$ is given by the inequality $\mu^2+k\mu+k<0$,
%$d^2-krd+kr^2<0$, 
and thus there are only finitely many orbits of achievable slopes in that interval (see Proposition \ref{higher-deg-dP-prop} below).
\end{remark}

\section{Exceptional bundles on the weighted projective lines}\label{weighted-sec}

\subsection{Generalities on hyperelliptic weighted projective lines}

We refer to \cite{Lenzing} and \cite{Meltzer} for basic facts about weighted projective lines and exceptional sheaves on them. We will only consider {\it hyperelliptic}
weighted projective lines associated with a collection of $n$ distinct points $\la_1,\ldots,\la_n$ on $\P^1(k)$ (``hyperelliptic" means that all the weights are $2$).
Recall that for the normalization $\la_1=\infty$, $\la_2=0$, $\la_3=1$, the corresponding weighted projective line 
$\CC=\CC_n=\CC(\la_1,\ldots,\la_n)$ is defined as a stacky quotient $(\Spec(S(\la_\bullet))-0)/\G_m$, for the graded algebra
$$S(\la_\bullet)=k[X_1,\ldots,X_n]/(X_i^2-X_2^2+\la_i X_1^2 \ | \ i=3,\ldots,n).$$ 
%Denote by $C_n$ ($n$ points with weight $2$).
The $\Z$-grading on the algebra $S(\la_\bullet)$ (where $\deg(X_i)=1$) extends to a grading with values in a bigger abelian group
$L$ with generators $x_i$ (so that $\deg(X_i)=x_i)$ and defining relations $2x_1=\ldots=2x_n$.

For every element $l\in L$, the coresponding free graded module $S(\la_\bullet)(l)$ induces a line bundle on $\CC$, and this defines
an isomorphism $L\simeq \Pic(\CC)$. We will denote by $\OO_{\CC}(1)$ the line bundle corresponding to the canonical element $2x_i\in L$.
The dualizing line bundle on $\CC$ is $\om_{\CC}=\OO_{\CC}(n-2)(-\sum_{i=1}^n x_i)$.
We define the degree of line bundles on $\CC$ using the homomorphism $\deg:L\to \Z$ sending each $x_i$ to $1$.

The category of coherent sheaves on $\CC$ is hereditary (has trivial $\Ext^{\ge 2}$), so every indecomposable object of the derived category
$D^b(\Coh \CC)$ is a sheaf up to a shift.
For every $i=1,\ldots,n$, we have two simple coherent sheaves on $\CC$, $S_i$ and $S'_i$, fitting into the exact sequences
$$0\to \OO_{\CC}(-x_i)\to \OO_{\CC}\to S_i\to 0,$$
$$0\to \OO_{\CC}\to \OO_{\CC}(x_i)\to S'_i\to 0.$$
We have a full exceptional collection
\begin{equation}\label{Cn-main-coll}
\OO_{\CC}(-1), \OO_{\CC}, (S_i)_{i=1,\ldots,n}
\end{equation}
 in $D^b(\Coh \CC)$ (where the last block consists of mutually orthogonal objects).
%Description of $\Pic(C_n)$. 
%Degree and rank.
%Serre duality.

It is known that every exceptional vector bundle on $\CC$ is slope stable (see \cite[Prop.\ 2.3.7]{Meltzer}).
An exceptional sheaf is uniquely determined by its class in $K_0$ (see \cite[Lem.\ 3.4.1]{Meltzer}).
%Define degree and rank.
It is also known (see \cite[Thm.\ 3.5.1]{Meltzer}) that the degree and rank of an exceptional bundle on $\CC$ are coprime.

We denote by $Q(D_n)\sub P(D_n)$ the root and the weight lattices of type $D_n$, respectively.
We denote by $\eps_1,\ldots,\eps_n$ the standard orthonormal basis of $Q(D_n)_{\Q}$ (used in \cite{Bour}), so that
the roots are $\pm \eps_i\pm \eps_j$. The weights of $D_n$ are elements $\sum_i y_i\eps_i$ such that $y_i\in \Z/2$ and $y_i-y_j\in \Z$.

\begin{lemma}\label{hyperell-Dn-lem} 
(i) For $\CC=\CC(\la_1,\ldots,\la_n)$, let $\La_n\sub K_0(\CC)$ denote the kernel of the antisymmetrization of the Euler form $\chi(\cdot,\cdot)$.
Then $x\in \La_n$ if and only if $\deg(x)=\rk(x)=0$.
Furthermore, $\La_n$ equipped with the (symmetric) form $\chi(\cdot,\cdot)$
is isomorphic to $Q(D_n)$, the root lattice of type $D_n$, in such a way that 
%the natural orthonormal basis $(\eps_1,\ldots,\eps_n)$
%in $Q(D_n)_{\Q}$ (such that the roots are $\pm \eps_i\pm \eps_j$) is given by
\begin{equation}\label{eps-Cn-eq}
\eps_i=[S_i]+\frac{1}{2}([\OO_\CC(-1)]-[\OO_\CC]).
\end{equation}

\noindent
(ii) One has a decomposition $K_0(\CC)_{\Q}=M\oplus (\La_n)_{\Q}$, where $M$ is the left (or right) orthogonal to $\La_n$ with respect to $\chi$.
Let $\pi:K_0(\CC)_{\Q}\to (\La_n)_{\Q}\simeq Q(D_n)_{\Q}$ denote the projection along $M$. Then 
$$\pi[\OO_\CC]=\pi[\OO_\CC(-1)]=\frac{1}{2}\sum_{i=1}^n\eps_i, \ \ \pi[S_i]=\eps_i, \ i=1,\ldots,n.$$
In particular, $\pi(K_0(\CC))$ is contained in $P(D_n)$, the weight lattice of $D_n$.

\noindent
(iii) We have
\begin{equation}\label{Cn-chi-formula}
\chi([V],[V'])=(1-\frac{n}{4})r(V)r(V')+\frac{1}{2}(d(V')r(V)-d(V)r(V'))+\chi(\pi[V],\pi[V']).
\end{equation}
%The (right or left) orthogonal complement to $(\La_n)_{\Q}$ in $K_0(C_n)_{\Q}$ with respect to $\chi$ is spanned
%by $[\OO]-\frac{1}{2}\sum_i \eps_i$ and $[\OO(-1)-\frac{1}{2}\sum_i \eps_i$.
%\noindent
%(iii) The classes $(\pi[S_i],\pi[S'_i])_{i=1,\ldots,n}$ form a single $W(D_n)$-orbit $
\end{lemma}

\begin{proof} (i) Since the classes of objects of the exceptional collection \eqref{Cn-main-coll} freely generate $K_0$, the Euler form $\chi$
is determined by the condition that $\chi(E,E)=1$ for each object $E$ of \eqref{Cn-main-coll} and by
$$\chi(\OO_\CC(-1),\OO_\CC)=2, \chi(\OO_\CC(-1),S_i)=\chi(\OO_\CC,S_i)=1,$$
with other values on objects of \eqref{Cn-main-coll} vanishing by semiorthogonality.
Thus, for $\chi^-(A,B)=\chi(A,B)-\chi(B,A)$ we have
\begin{align*}
&\chi^-(a_1[\OO_\CC(-1)]+a_0[\OO_\CC]+\sum_i b_i S_i,a'_1[\OO_\CC(-1)]+a'_0[\OO_\CC]+\sum_i b'_i S_i)\\
&=2(a_1a'_0-a'_1a_0)+\sum_i(a_0+a_1)b'_i-\sum_i (a'_0+a'_1)b_i.
\end{align*}
On the other hand, 
$$\rk(a_1[\OO_\CC(-1)]+a_0[\OO_\CC]+\sum_i b_i S_i)=a_0+a_1, \ \ \deg(a_1[\OO_\CC(-1)]+a_0[\OO_\CC]+\sum_i b_i S_i)=-2a_1+\sum_i b_i.$$
This immediately shows that the kernel $\La_n$ of $\chi^-(\cdot,\cdot)$ coincides with $\ker(\rk,\deg)\sub K_0(\CC)$. The above formula also shows that $\eps_i\in (\La_n)_{\Q}$
given by \eqref{eps-Cn-eq} are orthonormal with respect to $\chi(\cdot,\cdot)$. 

Furthermore, $\La_n=\ker(\rk,\deg)$ is generated by the classes $S_i-S_j$ and $[\OO_\CC(-1)]-[\OO_\CC]+2S_i$, which are given in terms of the basis $(\eps_i)$ as
$$S_i-S_j=\eps_i-\eps_j, \ \ [\OO_\CC(-1)]-[\OO_\CC]+2S_i=2\eps_i,$$
so $\La_n$ coincides with the root lattice $Q(D_n)$. 

\noindent
(ii) One immediately checks that the classes $[\OO_\CC]-\sum_i\eps_i/2$ and $[\OO_\CC(-1)]-\sum_i\eps_i/2$ are orthogonal to $\La_n$ with respect to $\chi(\cdot,\cdot)$.
This gives the required decomposition with $M$ spanned by these two classes and also implies the formulas for $\pi[\OO_\CC]$, $\pi[\OO_\CC(-1)]$ and $\pi[S_i]$.

\noindent
(iii) This formula can be checked on the generating classes $[\OO_\CC(-1)]$, $\OO_\CC$, $(\eps_i)$. 
%For any integral class $x\in K_0(\CC)$, and for
%$\la\in \La_n=Q(D_n)$, we have
%$$\chi(x,\la)=\chi(\pi(x),\la)\in \Z,$$
%Since $\la$ can be any element of the root lattice $Q(D_n)$, it follows that $\pi(x)\in P(D_n)$.
\end{proof}

%Numerical condition for exceptionality:

%Action of $W(D_n)$???

Let us now consider the case $n=4$.
This case is special in that the corresponding weighted projective lines have more derived autoequivalences (see \cite{LM}).
Also, in this case the first term in the formula \eqref{Cn-chi-formula} vanishes.

\begin{definition}\label{Brd-def}
For a pair of integers $(r,d)$, not both even, let us denote by $B(r,d)\sub \Q^4=Q(D_4)_{\Q}$ the set of vectors $y=y_1\eps_1+\ldots+y_4\eps_4$ such that 
\begin{itemize}
\item $\sum_{i=1}^4 y_i^2=1$;
\item $y_i\equiv r/2\mod \Z$;
\item $\sum_{i=1}^4 y_i\equiv d \mod 2\Z$;
\end{itemize}
\end{definition}

It is easy to see that $B(r,d)$ consists of exactly $8$ elements forming a single $W(D_4)$-orbit. 
%(for each parity of $r$ and $d$). 
More precisely, $B(r,d)$ depends only on the parity of $r$ and $d$, and 
$$B(0,1)=W(D_4)\cdot \eps_1,$$
$$B(1,0)=W(D_4)\cdot \frac{1}{2}(\eps_1+\eps_2+\eps_3+\eps_4),$$
$$B(1,1)=W(D_4)\cdot \frac{1}{2}(\eps_1+\eps_2+\eps_3-\eps_4).$$
There is a unique element of $B(r,d)$ in the fundamental chamber of $D_4$, $y_1\ge y_2\ge y_3\ge |y_4|$.

\begin{lemma}\label{C4-exc-lem}
Let $\CC=\CC(\la_1,\ldots,\la_4)$.
For every coprime $(r,d)$, a class $v\in K_0(\CC)$ with $\chi(v,v)=1$, $\rk(v)=r$ and $\deg(v)=d$ is
determined by $\pi(v)=\sum_{i=1}^4 y_i\eps_i$ which lies in $B(r,d)$.
%is a unit vector satisfying $y_i\equiv r/2\mod \Z$, $\sum y_i\equiv d \mod 2\Z$.
For every coprime $(r,d)$ with $r>0$
(resp., for $(r,d)=(0,1)$), there are $8$ exceptional sheaves $F$ with $\rk(F)=r$ and $\deg(F)=d$, so that $\pi[F]$ realize all $8$ vectors in $B(r,d)$.
\end{lemma} 

\begin{proof}
%Let $\pi(v)=\sum_{i=1}^4 y_i\eps_i$.
From formula \eqref{Cn-chi-formula} with $n=4$, we immediately see that $\chi(v,v)=1$ if and only if $\sum_i y_i^2=1$.
Furthermore, since $\pi[S_i]=\eps_i$, by the same formula, $\chi(v,[S_i])=r/2+y_i$, which implies that $y_i\equiv r/2 \mod \Z$.
On the other hand, this formula gives $\chi([\OO_\CC],v)=d/2+\sum_i y_i/2$. Hence, $\sum_i y_i\equiv d \mod 2\Z$,
which shows that $\pi(v)$ is in $B(r,d)$.

To show the second statement we use the fact that for any slope $\mu$ there exists an autoequivalence 
of $D^b(\Coh \CC_4)$ taking indecomposable bundles
of slope $\mu$ to indecomposable bundles of slope $0$ (see \cite{LM}). Therefore, it is enough to check the second assertion
for line bundles of degree $0$, in which case it is straightforward.
\end{proof}

\subsection{Omnipresent exceptional bundles on $\CC_5$}

Recall (see \cite[Sec.\ 6.3]{Meltzer}) that an exceptional bundle $V$ on a weighted projective line $\CC_n$ is called {\it omnipresent} if $\Hom(V,S_i)\neq 0$ and $\Hom(V,S'_i)=0$ for all $i=1,\ldots,n$.
Since the corresponding $\Ext^1$-groups vanish, this is equivalent to the numerical condition $\chi(V,S_i)\neq 0$, $\chi(V,S'_i)\neq 0$.
By Serre duality, this is equivalent to $\chi(S_i,V)\neq 0$, $\chi(S'_i,V)\neq 0$ for all $i$.

Our goal is to show the absence of omnipresent bundles on $\CC_5=\CC(\la_1,\ldots,\la_5)$ in a certain range of $(\rk(V),\deg(V))$ (see Proposition \ref{C5-omnipresent-prop} below).

\begin{lemma}\label{C5-main-lem}
Let $r$ be a positive integer.  If $v=\sum_{i=1}^5 y_i\eps_i\in P(D_5)$ is in the fundamental chamber of $D_5$ (i.e., $y_1\ge \ldots\ge y_4\ge |y_5|$) and satisfies 
$|v|^2 = r^2/4+1$ and $\sum_i y_i\le r/2+\sqrt{r}-1$, then one of the following holds:
\begin{enumerate}
\item $r$ is odd and $v=\frac{r}{2}\eps_1+\frac{1}{2}(\eps_2+\eps_3+\eps_4)\pm \frac{1}{2}\eps_5$;
\item $r$ is even and $v=\frac{r}{2}\eps_1+\frac{1}{2}\eps_2$;
\item $r$ is an odd square and $v=(\frac{r}{2}-1)\eps_1+\frac{\sqrt{r}}{2}(\eps_2+\eps_3+\eps_4-\eps_5)$;
\item $r$ is an even square and $v=(\frac{r}{2}-1)\eps_1+\sqrt{r}\eps_2$;
\item $r=11$ and $v=\frac{5}{2}(\eps_1+\eps_2+\eps_3+\eps_4-\eps_5)$; 
\item $r=16$ and $v=4(\eps_1+\eps_2+\eps_3+\eps_4)-\eps_5$.
\end{enumerate}
\end{lemma}

\begin{proof}
Since $v\in P(D_5)$, we either have $y_i\in \Z$ for all $i$, in which case $|v|^2\in \Z$, so $r$ is even, or we have $y_i\in 1/2+\Z$ for all $i$, in which case
$|v|^2\in 1/4+\Z$, so $r$ is odd.
% has integer square norm iff it is in the
%integral sublattice; we thus find that $v\in \Z^5$ iff $r$ is even.  

If $y_1>r/2$ then $y_1\ge r/2+1$, so that $|v|^2\ge y_1^2>r^2/4+1$, which is a contradiction.  We thus have $y_1\le r/2$.  Set $\delta:=r/2-y_1$.
As we have seen above, $\delta$ is a nonnegative integer. Our condition on the sum of $y_i$ can be rewritten as
$$y_2+\ldots+y_5\le \sqrt{r}-1+\de.$$
Furthermore, since $y_i\ge |y_5|$ for $i\le 4$, this implies the inequalities 
%and restore the $D_4$ symmetry to obtain
%the set of vectors such that
\[
|y_2+y_3+y_4+y_5|,\ |y_2+y_3-y_4-y_5|,\ |y_2-y_3+y_4-y_5|,\ |y_2-y_3-y_4+y_5|\le
\sqrt{r}-1+\delta
\]
Furthermore, since the left-hand sides are integers, we can replace $\sqrt{r}$ by $\lfloor \sqrt{r} \rfloor$.
The above inequalities define a hypercube with the side $\lfloor \sqrt{r} \rfloor-1+\delta$ in the Euclidean space $\R^4$ with coordinates $y_2,\ldots,y_5$.
On the other hand, we have
\[
|y_2|,\ |y_3|,\ |y_4|,\ |y_5|\le r/2-\delta,
\]
%Each group of inequalities cuts out a cube, and thus one finds
which is a hypercube with the side $r-2\delta$.
Since for a point in a hypercube in $\R^4$ with side $a$, the distance from the center is bounded by $a$, we deduce that
\[
y_2^2+y_3^2+y_4^2+y_5^2\le \min(\lfloor \sqrt{r} \rfloor-1+\delta,r-2\delta)^2,
\]
and thus
\begin{equation}\label{main-ineq}
r^2/4+1=|v|^2\le (r/2-\de)^2+\min(\lfloor \sqrt{r} \rfloor-1+\delta,r-2\delta)^2.
\end{equation}
Hence,
\[
r^2/4+1 \le \min(f(\de),g(\de)),
\]
where 
$$f(x)=(r/2-x)^2+(\sqrt{r}-1+x)^2, \ \ g(x)=(r/2-x)^2+(r-2x)^2=\frac{5}{4}(r-2x)^2.$$

The roots of the quadratic equation
$$f(x)=r^2/4+1$$
are $x=1$ and $x=r/2-\sqrt{r}$. Hence, we have
\begin{equation}\label{ineq1}
r^2/4+1>f(\de) \ \text{ for } 1<\delta<r/2-\sqrt{r}.
%(r/2-\delta)^2+(\sqrt{r}-1+\delta)^2
\end{equation}

Note also that $f(x_0)=g(x_0)$ for $x_0=(r-\sqrt{r}+1)/3$ and 
$$\min(f(x),g(x))=\begin{cases} f(x), & x\le x_0, \\ g(x), & x\ge x_0.\end{cases}$$

%We claim that this implies that for $r>20$ we must have $\delta\le 1$. Indeed, 
Assume $r>20$ and $\delta>1$. Then 
$$x_0=(r-\sqrt{r}+1)/3<r/2-\sqrt{r}.$$
In particular, by \eqref{ineq1}, $r^2/4+1>f(x_0)$.
If $\delta<x_0$ then using \eqref{ineq1} again we get 
$$r^2/4+1>f(\de)=\min(f(\de),g(\de)),$$
which is a contradiction. Hence, $\delta\ge x_0$. But the function $g(x)$ is decreasing, so we get
$$r^2/4+1>f(x_0)=g(x_0)\ge g(\de)=\min(f(\de),g(\de)),$$
which is again a contradiction.
%$$r^2/4+1=f(
% and that this is the dominant inequality for
%$\delta\le (r-\sqrt{r}+1)/3$.  Since the other inequality gets strictly
%tighter as $\delta$ increases, we see that when
%$(r-\sqrt{r}+1)/3<r/2-\sqrt{r}$ (i.e., $r\le 19$), we must have $\delta\le
%1$.

It remains to enumerate the cases with $\delta=0$ and $\delta=1$, as well as
the finitely many cases with $r\le 19$ and $\delta>1$. 
Assume first that $r\le 19$ and $\delta>1$. As before, we only need to consider $\de\ge r/2-\sqrt{r}$ (otherwise, $r^2/4+1>f(\de)$).
Together with the inequality $r^2/4+1\le g(\de)$ this gives 
$$\frac{r}{2}-\sqrt{r}\le \de\le \frac{1}{2}\bigl(r-\sqrt{\frac{r^2+4}{5}}\bigr).$$
This gives the following possibilities: 1) $\de=5$, $r=18$; 2) $\de=4$, $r=16,15$; 3) $\de=3$, $r=13,12,11$; 4) $\de=2$, $r=10,9,8$.
Of these, only the pairs 
$$(r,\de)=(16,4), \ (11,3), \ (9,2)$$
satisfy the original inequality \eqref{main-ineq} involving the floor function.
% For the latter,
%the only values of $(r,\delta)$ that satisfy the inequality (with
%$\sqrt{r}$ replaced by $\lfloor \sqrt{r}\rfloor$) are: $r=9$, $\delta=2$,
%$r=11$, $\delta=3$, and $r=16$, $\delta=4$.  
In each case, we can easily
enumerate all lattice vectors in the relevant simplex, and find that there
are no solutions for $r=9$, while for $r=11$ and $r=16$ the only solutions are 
given in (5) and (6).

For $\delta=0$, we must have $y_2^2+y_3^2+y_4^2+y_5^2=1$, and one can easily check that this leads to
the solutions given in (1) and (2).

Finally, for $\delta=1$, we must have
\[
|y_2+y_3+y_4+y_5|,\ |y_2+y_3-y_4-y_5|,\ |y_2-y_3+y_4-y_5|,\ |y_2-y_3-y_4+y_5|\le
\sqrt{r}
\]
and
\[
y_2^2+y_3^2+y_4^2+y_5^2 = r.
\]
This forces $(y_2,y_3,y_4,y_5)$ to be a vertex of the hypercube, and thus,
$(y_2,\ldots,y_5)$ is either $(\sqrt{r},0,0,0)$ or $\sqrt{r}(1/2,1/2,1/2,-1/2)$, giving cases (3)
and (4) and finishing the classification.
\end{proof}

\begin{prop}\label{C5-omnipresent-prop}
Let $V$ be an omnipresent exceptional bundle of rank $r$ and degree $d<0$ on $\CC_5$, and suppose that $r>(-d+1)^2$.  Then $(r,d)=(11,-2)$, and
$\pi[V]$ is in the $W(D_5)$-orbit of $-\frac{5}{2}\sum_{i=1}^5\eps_i$.
%$\frac{5}{2}(\eps_1+\eps_2+\eps_3+\eps_4-\eps_5)$.
\end{prop}

\begin{proof}
We will use the notations and the results of Lemma \ref{hyperell-Dn-lem}.
Let $v:=\pi[V]\in P(D_5)$. Using formula \eqref{Cn-chi-formula},
the condition $\chi(V,V)=1$ can be rewritten as 
$$|v|^2=r^2/4+1.$$ 

On the other hand, stability of $V$ implies that for any line bundle $L$ of degree 0 one has
$\Hom(L,V)=0$, hence, $\chi(L,V)\le 0$. 
Taking $L$ to be either $\OO$ or of the form $\OO(x_i-x_j)$ or $\OO(x_i+x_j-x_k-x_l)$ and
applying formula \eqref{Cn-chi-formula}, we get the inequality
$$\om\cdot v\le \frac{r}{4}-\frac{d}{2}$$
for every $\om$ in the $W(D_5)$-orbit of $\sum_{i=1}^5 \eps_i/2$.
Let $w\in W(D_5)$ be an element such that $v_0:=w\cdot v$ is in the fundamental chamber of $D_5$.
Then since the same inequalities hold for $v_0=\sum_i y_i\eps_i$, we get that
$$\sum_{i=1}^5 y_i\le \frac{r}{2}-d<\frac{r}{2}+\sqrt{r}-1.$$
Since the inequality here is strict, cases (3), (4), and (6) do not arise, and thus we are left with cases (1), (2), and (5).  In the first two cases, the fact that $y_1=r/2$ immediately translates to the existence of a point sheaf with $\chi(V,x)=0$, so only (5) remains, in which case we find that $r=11$ and
\[
y_1+y_2+y_3+y_4+y_5=15/2\le 11/2-d<9/2+\sqrt{11},
\]
forcing $-d=2$.
\end{proof}

\begin{remark}
The fact that an omnipresent exceptional bundle of rank $11$ and degree $-2$ on $\CC_5$ indeed exists (at least for some $\CC_5$) follows from 
Theorem \ref{sporadic-thm} below. Namely, it corresponds to the bundle $(p^*T_{\P^2}(1))(-Q)$ on a del Pezzo surface of degree $4$ presented as a blow up $p:X\to\P^2$,
using the equivalence of $\lan \OO_X\ran^\perp$ with $D^b(\Coh \CC_5)$ considered in Proposition \ref{C5-orth-prop}.
\end{remark}

\section{Exceptional pairs on del Pezzo surfaces of degree $4$}

\subsection{Connection with the derived category of $\CC_5$}\label{C5-conn-sec}

Let $X$ be a del Pezzo surface of degree $4$ with a geometric marking $h,e_1,\ldots,e_5$.
The homomorphism $\det$ identifies the subgroup $\ker(\rk)\sub K_0(X)$ with $\Pic(X)$,
hence, we can identify the sublattice $\lan Q\ran^\perp\sub \Pic(X)$ of elements of degree $0$ in $\Pic(X)$ (which is
identified with the root lattice of $D_5$) with the subgroup $\ker(\rk,\deg)\sub K_0(X)$.

\begin{prop}\label{C5-orth-prop}
There is an equivalence 
$$\Phi: D^b(\Coh \CC)\rTo{\sim}\lan \OO_X\ran^{\perp}$$
for a weighted projective line $\CC=\CC_5$, such that
$$\Phi(\OO_\CC(-3)(\sum x_i))=\OO_X(-2h+\sum e_i), \ \ \Phi(\OO_\CC(-2)(\sum x_i))=\OO_X(-h)[1],$$
$$\Phi(S'_i)=\OO_X(e_i-h)[1], \ \ \Phi(S_i)=\OO_X(-Q-e_i+h)[1], \ i=1,\ldots,5.$$
One has
\begin{equation}\label{deg-rk-C5-X-eq}
\rk_X(\Phi(V))=-\deg_\CC(V), \ \ \deg_X(\Phi(V))=\rk_\CC(V)+2\deg_\CC(V).
\end{equation}
%$$\rk_C(\Phi^{-1}(V))=2\rk_X(V)+\deg_X(V), \ \ \deg_C(\Phi^{-1}(V))=-\rk_X(V).$$
Furthermore, the isomorphism induced by $\Phi$,
$$Q(D_5)\simeq \ker(\rk_\CC,\deg_\CC)\rTo{\Phi} \ker(\rk_X,\deg_X)\simeq \lan Q\ran^\perp$$
coincides with the one coming from the geometric marking (see Sec.\ \ref{blowdown-sec}).
%is compatible with the $W(D_5)$-action.
\end{prop}

\begin{proof}
The full exceptional collection on $X$ (see \cite{Orlov}),
$$(\OO_{e_i}(-1)[-1])_{i=1,\ldots,5},\OO_X(-2h),\OO_X(-h),\OO_X)$$
gives a full exceptional collection in $\lan \OO_X\ran^\perp$ (by deleting the last object).
The exact sequence
\begin{equation}\label{1st-mutation-eq}
0\to \OO_X(-2h)\to \OO_X(-2h+\sum_i e_i)\to \bigoplus_i \OO_{e_i}(-1)\to 0
\end{equation}
shows that by left mutating $\OO_X(-2h)$ through all $\OO_{e_i}(-1)[-1]$, we get a full exceptional collection
$$\OO_X(-2h+\sum_i e_i)[-1], (\OO_{e_i}(-1)[-1])_{i=1,\ldots,5}, \OO_X(-h)$$
in $\lan \OO_X\ran^\perp$.
Now the exact sequences
$$0\to \OO_X(-h)\to \OO_X(e_i-h)\to \OO_{e_i}(-1)\to 0$$
show that by right mutating all $\OO_{e_i}(-1)[-1]$ through $\OO_X(-h)$, we get an exceptional collection
\begin{equation}\label{mutated-exc-coll}
\OO_X(-2h+\sum_i e_i)[-1], \OO_X(-h), (\OO_X(e_i-h))_{i=1,\ldots,5}.
\end{equation}
Let us show that this collection is strong and compute the corresponding algebra.

Applying the functors $\Ext^*(?,\OO_X(-h))$ to exact sequence \eqref{1st-mutation-eq}, we get that $\Ext^i(\OO_X(-2h+\sum_i e_i),\OO_X(-h))=0$ for $i\neq 1$,
and we get an exact sequence
$$0\to \Hom(\OO_X(-2h),\OO_X(-h))\to \Ext^1(\bigoplus_i \OO_{e_i}(-1),\OO(-h))\to \Ext^1(\OO_X(-2h+\sum_i e_i),\OO_X(-h))\to 0.$$
Consider the $1$-dimensional spaces $L_i:=\Ext^1(\OO_{e_i}(-1),\OO_X(-h))$ and let $\wt{V}=\bigoplus_{i=1}^5 L_i$. The above exact sequence gives an embedding of 
the $3$-dimensional space $W:=\Hom(\OO_X(-2h),\OO_X(-h))$ into
$\wt{V}$, such that
%Since each space $\Ext^1(\OO_{e_i}(-1),\OO(-h))$ is $1$-dimensional, we see that the space 
$\Ext^1(\OO_X(-2h+\sum_i e_i),\OO_X(-h))$ is identified with the $2$-dimensional quotient $V:=\wt{V}/W$.
%is $2$-dimensional.
Furthermore, since the points that we blow up in $\P^2$ are not collinear, we have  $\Hom(\OO_X(-2h+e_i+e_j+e_k),\OO_X(-h))=0$ for any distinct triple $(i,j,k)$.
Hence, applying $\Ext^*(?,\OO_X(-h))$ to an analog of the exact sequence \eqref{1st-mutation-eq}, with only three classes $(e_i,e_j,e_k)$ instead of all five, shows that the natural projection $W\to L_i\oplus L_j\oplus L_k$ is an isomorphism. It follows that the five lines $(W+L_i)/W$ in $V=\wt{V}/W$ are distinct. 

Next, applying $\Ext^*(\OO_X(-2h+\sum_j e_j),?)$ to the exact sequence 
$$0\to \OO_X(-h)\to \OO_X(e_i-h)\to \OO_X(e_i)|_{e_i}\to 0$$
we see that $\Ext^i(\OO_X(-2h+\sum_j e_j),\OO_X(e_i-h))=0$ for $i\neq 1$,
while the natural map $\Ext^1(\OO_X(-2h+\sum_j e_j),\OO_X(-h))\to \Ext^1(\OO_X(-2h+\sum_j e_j),\OO_X(e_i-h))$ is surjective, so it can be identified with 
the natural projection $V=\wt{V}/W\to \wt{V}/(W+L_i)$.

We can view the projections $V\to \wt{V}/(W+L_i)$ as five distinct points on the projective line $\P(V^*)$, so we get an identification of the algebra of the collection
\eqref{mutated-exc-coll} with that of the strong full exceptional collection
\begin{equation}\label{C5-stand-exc-coll}
\OO_\CC(-1), \OO_\CC, (S_i)_{i=1,\ldots,5}
\end{equation}
on the corresponding weighted projective line $\CC=\CC_5$. Hence, there exists an equivalence 
$$\wt{\Phi}:D^b(\Coh \CC)\rTo{\sim} \lan\OO_X\ran^\perp,$$
identifying the exceptional collections \eqref{C5-stand-exc-coll} and \eqref{mutated-exc-coll}.
It is easy to see that the left mutation of $S_i$ through $\lan\OO_\CC(-1),\OO_\CC\ran$
%$\OO_\CC$ and then through $\OO_\CC(-1)$ 
is isomorphic to $S'_i[-1]$.
On the other hand, the left mutation of $\OO_X(e_i-h)$ through $\OO_X(-h)$ is $\OO_{e_i}(-1)[-1]$, and the exact sequence
$$0\to \OO_X(-Q+h-e_i)\to \OO_X(-2h+\sum_j e_j)\to \OO_{e_i}(-1)\to 0$$
shows that the left mutation of $\OO_{e_i}(-1)[-1]$ through $\OO_X(-2h+\sum_j e_j)[-1]$ is $\OO_X(-Q+h-e_i)[-1]$.
Hence, $\wt{\Phi}(S'_i)\simeq \OO_X(-Q+h-e_i)$.

To obtain the equivalence $\Phi$ we compose $\wt{\Phi}$ with the autoequivalence
$$F\mapsto F\ot \OO_\CC(2)(-\sum x_i)[1]$$
of $D^b(\Coh \CC)$.
Then $\Phi$ identifies the exceptional collection \eqref{mutated-exc-coll} in $\lan\OO_X\ran^\perp$ with
the collection
$$\OO_\CC(-3)(\sum x_i), \OO_\CC(-2)(\sum x_i), (S'_i)_{i=1,\ldots,5}$$
on $\CC$, and sends $S_i$ to $\OO_X(-Q+h-e_i)[1]$. 

Since the classes of the objects of our exceptional collections generate the corresponding Grothendieck groups, the assertions concerning the induced map on $K_0$ follow,  
including the relation between the ranks and degrees.
%To see that $\Phi(S_i)\simeq \OO_X(Q-e_i+h)[1]$, we observe that there is an exact sequence
\end{proof}

%Corollary: assume $(\OO,V)$ is an exceptional pair on $X$, where $\mu>r+1/r$. Then there exists a $\P^1\times\P^1$-type geometric marking such that
%either $V\in \lan \OO_X(-f),\OO_X\ran^\perp$ or $\rk(V)=2$, $c_1(V)=???$

%Proof: use the fact that the classes of exceptional simple sheaves are sent by $\Phi$ to objects of rank $1$ and $c_1$ in the $W(D_5)$-orbit of $e_i-h$.
%Pass to $\P^1\times\P^1$-type geometric marking.

\subsection{Connection with the derived category of $\CC_4$}\label{C4-conn-sec}

Let $X$ be a del Pezzo surface of degree $4$ with a geometric basis $(h,e_1,\ldots,e_5)$, and let
$(f,s,E_1,\ldots,E_4)$ be the corresponding $\P^1\times\P^1$-type basis (see \eqref{s-f-marking-eq}).

\begin{prop}\label{C4-orth-prop}
(i) There is an equivalence 
$$\Phi_f:D^b(\Coh \CC)\rTo{\sim}\lan \OO_X(-f),\OO_X\ran^{\perp} $$
for a weighted projective line $\CC=\CC_4$, such that
$$\Phi_f(\OO_\CC(-2))=\OO_X(-s-f), \ \ \Phi_f(\OO_\CC(-1))=\OO_X(-s), \ \ \Phi_f(S'_i)=\OO_{E_i}(-1), \ i=1,\ldots,4.$$

\noindent
(ii) One has
$$\rk_X(\Phi_f(V))=\rk_\CC(V), \ \ \deg_X(\Phi_f(V))=\deg_\CC(V),$$
and the map $\det\circ \Phi_f:K_0(\CC_4)\to\Pic(X)$ is given by
$$\det\circ \Phi_f:r\cdot ([\OO_\CC]-\frac{1}{2}\sum \eps_i)+d\cdot \frac{1}{2}([\OO_\CC]-[\OO_\CC(-1)])+\sum y_i\eps_i\mapsto r(f-\frac{Q}{2})+d\frac{f}{2}+\sum_i y_i(\frac{f}{2}-E_i).$$
The restriction of $\det\circ \Phi_f$ to $Q(D_4)\simeq \ker(\rk_\CC,\deg_\CC)\sub K_0(\CC_4)$
%Furthermore, the isomorphism induced by $\Phi^{-1}_f$
%$$Q(D_4)\simeq \ker(\rk_C,\deg_C)\rTo{\Phi^{-1}_f} \ker(\rk_X,\deg_X)\simeq \lan Q\ran^\perp$$
coincides with the map $A:Q(D_4)_{\Q}\to Q(D_5)_{\Q}\sub \Pic(X)$ given by
$$\eps_1\mapsto \frac{f}{2}-E_1=\frac{1}{2}(-\eps_1+\eps_2+\eps_3+\eps_4),$$
$$\eps_2\mapsto \frac{f}{2}-E_2=\frac{1}{2}(\eps_1-\eps_2+\eps_3+\eps_4),$$
$$\eps_3\mapsto \frac{f}{2}-E_3=\frac{1}{2}(\eps_1+\eps_2-\eps_3+\eps_4),$$
$$\eps_4\mapsto \frac{f}{2}-E_4=\frac{1}{2}(-\eps_1-\eps_2-\eps_3+\eps_4),$$
where on the right we use the basis $(\eps_i)_{i=1,\ldots,5}$ coming from the geometric marking on $X$ (see \eqref{eps-i-geom-eq}).
%(as the sublattice generated by $\a_2,\ldots,\a_5$) followed by
%the standard identification $Q(D_5)\simeq \lan Q\ran^\perp\sub \Pic(X)$ (see \eqref{D5-roots-s-f}).
%coincides with the one coming from the geometric marking and from the inclusion ???.
%is compatible with the $W(D_5)$-action.
The map $A$ sends $B(r,d)$ to $B(d,r+d)\sub\bigoplus_{i=1}^4 \R\eps_i$.
%$B(0,1)$ to $B(1,1)$, $B(1,1)$ to $B(1,0)$, and $B(1,0)$ to $B(0,1)$. 
\end{prop}

\begin{proof}
(i) Viewing $X$ as the blow up of $\P^1\times \P^1$ at $4$ points $q_1,\ldots,q_4$, gives the standard full exceptional collection in $D^b(\Coh X)$,
$$(\OO_{E_i}(-1)[-1])_{i=1,\ldots,4}, \OO_X(-s-f), \OO_X(-s), \OO_X(-f), \OO_X).$$
Deleting the last two objects we get a full exceptional collection in $\lan \OO_X(-f),\OO_X\ran^{\perp}$.

Let $F_i$ denote the exceptional curve with the class $f-E_i$ (the proper transform of the ruling $f$ through $q_i$)
The right mutation of $\OO_{E_i}(-1)[-1]$ through $\OO_X(-s-f)$ is  $\OO_X(-s-f+E_i)$, and the exact sequence
$$0\to \OO_X(-s-f+E_i)\to \OO_X(-s)\to \OO_X(-s)|_{F_i}\to 0$$
shows that the right mutation of $\OO_X(-s-f+E_i)$ through $\OO_X(-s)$ is $\OO_X(-s)|_{F_i}$.
Thus, we get a full exceptional collection
\begin{equation}\label{exc-coll-s-f-eq}
\OO_X(-s-f), \OO_X(-s), (\OO_X(-s)|_{F_i})_{i=1,\ldots,4}
\end{equation}
in $\lan \OO_X(-f),\OO_X\ran^{\perp}$. It is clear that this collection is strong and has the same algebra as the exceptional collection
\begin{equation}\label{C4-exc-coll-O(-2)-eq}
\OO_\CC(-2), \OO_\CC(-1), (S_i)_{i=1,\ldots,4}
\end{equation}
on the weighted projective line $\CC=\CC_4$ associated with the images of $q_1,\ldots,q_4$ under the projection to $\P^1$ given by $f$.
Hence, we get an equivalence $\Phi_f:D^b(\Coh \CC_4)\to \lan \OO_X(-f),\OO_X\ran^{\perp}$ sending the collection
\eqref{C4-exc-coll-O(-2)-eq} to \eqref{exc-coll-s-f-eq}.
Since, the left mutation of $S_i$ through $\lan \OO_\CC(-2),\OO_\CC(-1)\ran$ is $S'_i[-1]$, we have $\Phi_f(S'_i)=\OO_{E_i}(-1)$.

\noindent
(ii) The formulas for the rank and the determinant are checked on our exceptional collections. The assertions about the linear map $A$
are proved by a straightforward computation.
\end{proof}

\subsection{Existence of exceptional pairs with a given slope}

Let $X$ be a del Pezzo surface of degree $4$. As before, we fix a geometric marking $(h,e_1,\ldots,e_5)$, and denote by
$(f,s,E_1,\ldots,E_4)$ be the corresponding $\P^1\times\P^1$-type marking.
%First, we find exceptional pairs of the form we want in $K_0$. 

For coprime $a,b\in \Z$ and $y=\sum_{i=1}^4 y_i\eps_i\in B(a+b,a)$, we consider a class in $\Pic(X)_{\Q}$ given by
%$$D_{ab}(y)=(a/2)f + (b/2)(Q-f)+\sum_i y_i(e_i-f/2).$$
$$D_{ab}(y)=(a/2)f + (b/2)(Q-f)+y=(a/2)(Q/2-\eps_5)+(b/2)(Q/2+\eps_5)+y.$$
%(recall that there are $8$ possible vectors $y$ in $B(a+b,a)$).
Note that if $[V]$ is an exceptional class in $K_0(\CC_4)$ then the equivalence $\Phi_f$ sends a class $[V]$ with rank $r$ and degree $d$
to a class in $K_0(X)$ with $c_1(\Phi_f[V])=D_{r+d,-r}(y)$ for some $y\in B(d,r+d)$ (see Proposition \ref{C4-orth-prop}).
In particular,  all classes $D_{ab}(y)$ are integral and form a single $W(D_4)$-orbit (for fixed $a,b$).

\begin{lemma}\label{W-orbit-lem}
All the elements
$D_{ab}(y)$, $D_{ba}(y')$ and $-D_{-a,-b}(y'')$ (for fixed $a,b$) are in the same $W(D_5)$-orbit.
%\noindent
%(iii) $aQ-D_{ab}$ and $D_{a,2a-b}$ are in the same $W(D_5)$ orbit.
\end{lemma}

\begin{proof}
The set $S$ of elements $w\in W(D_5)$ such that $w(\eps_5)=-\eps_5$ and $w$ fixes the subset $\{\pm \eps_i \ |\ i=1,\ldots,4\}$
forms a $W(D_4)$-coset (left or right), and for $w\in S$ (and $y\in B(a+b,a)$), we have
$$wD_{ab}(y)=(a/2)(Q/2+\eps_5)+(b/2)(Q/2-\eps_5)+w(y).$$
To see that this element is of the form $D_{ba}(y')$ with $y'\in B(a+b,b)$, it is enough to check that for some (and hence for all) $y\in B(a+b,a)$ and some $w\in S$ one has
$w(y)\in B(a+b,b)$. 
Since the action of $w\in S$ on $Q(D_4)_{\Q}$ is in the $W(D_4)$-coset (left or right) of the transformation $A_1$ sending $\eps_1$ to $-\eps_1$
and fixing $\eps_i$ for $i=2,3,4$, this follows from the fact that $A_1$ preserves $B(0,1)$ and swaps $B(1,0)$ with $B(1,1)$.
%there is an element that swaps $f$ and $Q-f$ and thus normalizes the $W(D_4)$ that stabilizes them, 
%and a unique element of that coset that acts as a diagram automorphism on $W(D_4)$; that element swaps $D_{ab}$ and $D_{ba}$ for all $a,b$.  

On the other hand, we have
$$-D_{-a,-b}(y)=(a/2)f+(b/2)(Q-f)-y=D_{a,b}(-y)$$
(note that the longest element of $W(D_4)$ takes $y$ to $-y$).
%the assertion follows from the fact that $\delta_{ab}$ and $-\delta_{-a,-b}$ are in the same $W(D_4)$-orbit.
%\noindent
%(iii) Indeed, $D_{a,2a-b} = aQ+D_{-a,-b}$.
\end{proof}
 
% By \eqref{chi-V-eq}, an exceptional object $V$ on $X$ of rank $r$ and $c_1(V)=D_{ab}$ (hence of slope $(a+b)/r$) has Euler characteristic 
%$$\chi(V)=(r+a)(r+b)/2r.$$  
%So there are two possibilities for a $K_0$-theoretic exceptional pair $([V],[\OO])$ with $V$ of slope $d/r$, $r>0$: $c_1(V)=D_{d+r,-r}$ and $c_1(V)=D_{-r,d+r}$.
%By duality, the possibilities for $([\OO],[V])$ are $c_1(V)=D_{r,d-r}$ and $c_1(V)=D_{d-r,r}$.
%We will prove that these $K_0$-theoretic solutions are in fact realizable by exceptional pairs in $D^b(\Coh X)$.

The following is a more precise version of Theorem A (since $\deg(D_{d-r,r}(y))=\deg(D_{r,d-r}(y))=d$).

\begin{theorem}\label{main-thm}
For all relatively prime $(d,r)$, with $r>0$, and every $y\in B(d,d-r)$ (resp., $y'\in B(d,r)$)
there is a unique (up to isomorphism) exceptional bundle $V$ on $X$ of rank $r$ with $c_1(V)=D_{d-r,r}(y)$ (resp., $c_1(V)=D_{r,d-r}(y')$),
and that bundle satisfies $R\Hom(V,\OO_X)=0$.  Similarly, there is a unique exceptional bundle $V$ of rank $r$ with $c_1(V)=D_{d+r,-r}(y)$ (resp., $c_1(V)=D_{-r,d+r}(y')$),
and that bundle satisfies $R\Hom(\OO_X,V)=0$.
\end{theorem}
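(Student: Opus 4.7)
The rank $r$ together with $c_1(V)$ determines $\chi(V)$ by \eqref{chi-V-eq}, and hence the class of $V$ in $K_0(X)$. The preliminaries recall that an exceptional bundle on $X$ is determined up to isomorphism by its $K_0$-class; thus each of the four bundles in the statement is unique as soon as we have existence.

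\textbf{Reduction to a single case.} The four variants are pairwise equivalent under symmetries. The two sub-cases $c_1(V) = D_{d-r,r}$ and $c_1(V) = D_{r,d-r}$ within each clause are identified by Lemma \ref{W-orbit-lem}(i) together with Lemma \ref{W-lem}, which supplies a Weyl transformation of the blowdown structure sending one class to the other. Dualizing converts an exceptional pair $(V,\OO)$ (with $R\Hom(V,\OO) = 0$) to the exceptional pair $(\OO, V^\vee)$; since $c_1(V^\vee) = -c_1(V)$ lies in the $W(D_5)$-orbit of $D_{r-d,-r}$ by Lemma \ref{W-orbit-lem}(ii), this matches the $(\OO, V)$ clause with $d$ replaced by $-d$. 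So it suffices to prove: for each coprime $(d, r)$ with $r > 0$, there exists an exceptional pair $(V, \OO)$ with $\rk(V) = r$ and $c_1(V) = D_{d-r,r}$. By Lemma \ref{clopen-lem} we may fix any convenient degree-4 del Pezzo with blowdown structure.

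\textbf{Existence via induction on $r$.} For the base case $r = 1$ we take the line bundle $V = \OO(D_{d-1,1})$. All line bundles are exceptional on a del Pezzo, and the vanishing $R\Gamma(\OO(-D_{d-1,1})) = 0$ needed to make $(V, \OO)$ exceptional follows from $\chi = 0$ (by \eqref{chi-V-eq}) together with a direct check on $H^0$ and $H^2$ in the blowdown basis (e.g., using Kodaira vanishing). For $r > 1$, the idea is to apply the operations $M$ and $R$ of \eqref{mu-transf-eq} (with their actions $(d, r) \mapsto (-d, d+r)$ and $(d, r) \mapsto (-4r-d, r)$ from Lemma \ref{M-R-lem}), interleaved with Weyl group moves on the blowdown structure (Lemma \ref{W-lem}), to descend to a case of strictly smaller rank. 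Invoking the inductive hypothesis there and reversing the sequence (using that $M$ and $R$ are involutions on exceptional pairs) produces the desired $V$.

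\textbf{Main obstacle.} The crux is to verify that this reduction can always be accomplished. For $k = 4$, the dihedral group $\langle M, R\rangle$ acts \emph{parabolically} on slopes (Remark \ref{crit-rem}), so the orbit of a coprime pair $(d, r)$ under these involutions has a unique minimum rank which in general exceeds 1, and iterating $M$ or $R$ alone generically increases the rank. One must therefore exploit the full Weyl group action on blowdown structures (Lemma \ref{W-lem}, together with Lemma \ref{W-orbit-lem} and especially part (iii), which relates $D_{a,b}$ to $D_{a,2a-b}$) to transport a given class $D_{d-r, r}$ between $W(D_5)$-orbits where an application of $M$ \emph{does} lower the rank. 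Checking that this combined descent always terminates at the $r = 1$ base case, while simultaneously verifying that each intermediate bundle is exceptional with the prescribed numerical invariants, is the technically demanding part of the argument.
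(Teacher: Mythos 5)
Your uniqueness argument and the reduction to a single case (via Lemma \ref{W-orbit-lem}, duality, and Lemma \ref{clopen-lem}) agree with the paper. The gap is in the existence step, precisely at the point you flag as the crux, and the fix you propose there cannot work. A change of blowdown structure by $W(D_5)$ (Lemmas \ref{W-lem}, \ref{W-orbit-lem}) fixes $Q$, hence fixes both $\rk(V)$ and $\deg(V)=c_1(V)\cdot Q$; it only changes which class $D_{ab}$ represents $c_1(V)$. So on a fixed degree-$4$ surface the only operations available on the numerical pair $(d,r)$ are $M$ and $R$, and, as you yourself note, these generate a parabolic action on $\nu=1/(\mu+2)$ that preserves the denominator $|d+2r|$. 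Consequently the minimal rank attainable in the $\langle M,R\rangle$-orbit of $(d,r)$ is the distance from $\pm r$ to the nearest multiple of $d+2r$, which is typically greater than $1$ (e.g.\ for $(d,r)=(1,2)$ one has $d+2r=5$ and the minimal rank in the orbit is $2$). No amount of interleaving with $W(D_5)$ moves or with Lemma \ref{W-orbit-lem}(iii) changes this, so your induction on the rank with base case $r=1$ cannot be closed with the tools you invoke.

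The missing idea is Lemma \ref{inductive-lem}: one pulls the pair $(V,\OO)$ back to a degree-$1$ del Pezzo surface $X_1$ obtained by blowing up three generic points, applies there the composite $S=MRM$ (whose action on slopes for $k=1$ is $\mu\mapsto 1/\mu$), and then uses a carefully constructed element of $W(E_8)$ --- not $W(D_5)$ --- to change the blowdown structure of $X_1$ so that the transformed bundle descends, via Lemma \ref{dP-lem}(iii), to a \emph{different} degree-$4$ del Pezzo surface. The net effect on the slope is $\mu\mapsto -2-1/\mu$, which does change $|d+2r|$. The correct induction parameter is therefore $|2r+d|$ (the denominator of $\nu$), with the single base case $(r,d)=(1,-2)$, $V=\OO(D_{-1,-1})$, rather than the rank with base case $r=1$. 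This passage through degree $1$, together with the explicit $W(E_8)$ computation (the Claim inside the proof of Lemma \ref{inductive-lem}) and Lemma \ref{clopen-lem} to return to an arbitrary degree-$4$ surface, is the essential content absent from your outline.
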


%it is enough to prove the claim for one of the four classes, so we will focus on the case of pairs $(V,\OO)$, where $V$ has rank $r$
%and $c_1=D_{d+r,-r}$.
%this is equivalent to the analogous claim for $D_{r,d-r}$ and by duality to the analogous claims for $(V,O)$.

\begin{proof}
Uniqueness follows from the fact that an exceptional bundle $E$ is determined by its class in $K_0$, hence, if in addition $E\in \lan\OO_X\ran^\vee$, it is determined
by its rank and $c_1$.
By Lemma \ref{W-orbit-lem} and duality, 
it is enough to prove existence of an exceptional pair $(V,\OO_X)$ with $V$ of rank $r$ and $c_1(V)=D_{d+r,-r}(y)$. 
But this immediately follows from Proposition \ref{C4-orth-prop}.
\end{proof}

It is natural to ask whether our divisor classes $D_{a,b}(y)$ descend to del Pezzo surfaces of bigger degree (with respect to blow up maps).
 
\begin{prop}\label{Dab-descent-lem}
The divisor class of the form $D_{a,b}(y)$ is the pull-back of a class on a del Pezzo surface of degree $5$ if and only if either $|a|\le 2$ or $|b|\le 2$.
If  $a=\pm 1$ (resp., $b=\pm 1$), a class of the form $D_{a,b}(y)$ descends to $\P^1\times \P^1$ if $b$ is odd (resp. $a$ is odd); otherwise, it descends to the
Hirzubruch surface $F_1$.
%If either $a=\pm 1$ or $b=\pm 1$ then $D_{a,b}$ is the pull-back from a del Pezzo surface of degree $8$ (either $\P^1\times \P^1$ or the Hirzebruch surface $F_1$).
The classes corresponding to the pairs $(a,b)=(1,2),(-1,-2),(2,1),(-2,-1)$ are the pull-backs from $\P^2$.
\end{prop}

\begin{proof}
A class descends to a del Pezzo surface of degree $5$ if and only if is orthogonal to the class of an exceptional curve. In terms of the basis $(Q,\eps_1,\ldots,\eps_5)$ of
$\Pic(X)_{\Q}$, the set of classes 16 exceptional curves is the $W(D_5)$-orbit of 
$$2h-\sum_{i=1}^5 e_i=Q/4+\sum_{i=1}^5 \eps_i/2.$$
In other words, all exceptional classes have form $e=Q/4+\sum_{i=1}^5 x_i\eps_i$, where $x_i=\pm 1/2$, with even number of $x_i$'s negative. 
Recall that the basis $(\eps_i)$ is orthonormal with respect to the negative of the intersection form. Hence,
the intersection number of the class $e$ with $Q/2-\eps_5$ (resp., $Q/2+\eps_5$) is $1/2+x_5$ (resp., $1/2-x_5$). Thus,
we have
$$e\cdot D_{a,b}(y)=\begin{cases}  a/2+x\cdot y, & x_5=1/2, \\  b/2+ x\cdot y, & x_5=-1/2.\end{cases}$$
Since $|x\cdot y|\le 1$, this can be zero only if either $|a|\le 2$ or $|b|\le 2$. 

By Lemma \ref{W-orbit-lem}, it remains to consider the cases $b=2$ and $b=1$, with some choice of $y$. In the case $b=2$, $a$ is odd, so $y\in B(1,1)$, i.e., $y_i=\pm 1/2$, 
with odd number of negative $y_i$'s. Hence, $e=Q/4+y-\eps_5/2$ is an exceptional class with $e\cdot D_{a,b}(y)=0$ (recall that $y\cdot y=-1$). 

In the case where $b=1$ and $a$ is odd, we can take $y=-\eps_4$. Then
$$D_{a,1}(\eps_4)=\frac{a}{2}(Q/2-\eps_5)+\frac{1}{2}(Q/2+\eps_5)-\eps_4=\frac{a-1}{2}(Q/2-\eps_5)+Q/2-\eps_4=\frac{a-1}{2}f+s$$
which comes from $\P^1\times \P^1$.

In the case where $b=1$ and $a$ is even, we can take $y=-\sum_{i=1}^4 \eps_i/2$.
Then
$$D_{a,1}(-\sum_{i=1}^4\eps_i/2)=\frac{a}{2}(3Q/4-\sum_{i=1}^4\eps_i/2-\eps_5/2)-(\frac{a}{2}-1)(Q/4-\sum_{i=1}^4\eps_i/2+\eps_5/2)=\frac{a}{2}h-(\frac{a}{2}-1)e_5,$$
which comes from $F_1$, and even from $\P^2$ in the case $a=2$.
\end{proof}

%\begin{remark}\label{parity-descent-rem}
%It is easy to check that in the case $a=\pm 1$ (resp., $b=\pm 1$), the class $D_{a,b}$ descends to $\P^1\times \P^1$ if and only if $b$ is odd (resp. $a$ is odd); otherwise, it descends to the
%Hirzubruch surface $F_1$.
%\end{remark}

Combining Proposition \ref{Dab-descent-lem} with Lemma \ref{dP-lem}(ii) we derive the following corollary about descending our exceptional bundles to del Pezzo surfaces
of bigger degree.

\begin{cor}\label{exc-bun-descent-rem}
%By Lemma \ref{Dab-descent-lem}, the class $D_{d-r,r}$, and hence the corresponding 
The exceptional pairs $(\OO_X,V)$ constructed in Theorem \ref{main-thm} 
descend to a del Pezzo surface of degree $5$ provided $r=2$ or $r=d\pm 2$. Similarly, in the case $r=1$ or $r=d\pm 1$ they descend to a del Pezzo surface of degree $8$
(to $\P^1\times\P^1$ if $d$ is even, and to $F_1$ otherwise).
\end{cor}

\begin{remark}
%1. For $r=1$, one has
%$$D_{d-1,1} =\begin{cases}
%s + (d/2-1)f, & d \text{ even}\\
%s + ((d-1)/2)f - e_1, & d \text{ odd}
%\end{cases}
%$$
%so that $D_{d-1,1}$ has arithmetic genus $0$ for all $d$, and thus $(\OO,\OO(D_{d-1,1}))$ is an exceptional pair.  
%\noindent
%2. 
The statement of Theorem \ref{main-thm} also holds for $r=0$, with the sole exception that the sheaf one obtains is not a bundle but is supported on an exceptional
curve. Namely, the classes $D_{1,0}(y)$, for $y\in B(1,1)$, are exactly the $8$ exceptional classes that have trivial intersection with $f$.
%??? corresponding to the exceptional pair $(\OO,\OO_{e_4})$.  
%(This also follows from
%the above calculation, since $\infty$ is in the orbit containing $-1$.)
\end{remark}

\subsection{Non-sporadic range}

We refer to exceptional pairs $(\OO_X,V)$ or $(V,\OO_X)$, that do not have $c_1(V)$ described in Theorem \ref{main-thm} for some geometric marking, 
as {\it sporadic}. The next result gives a range of values $(r,d)$ for which sporadic pairs do not appear.
As before, $X$ denotes a del Pezzo surface of degree $4$.

\begin{theorem}\label{sporadic-thm}
Let $V$ be an exceptional bundle on $X$ of rank $r$ and slope $\mu=d/r$, such that the pair $(V,\OO_X)$ is exceptional. Assume that 
%$\mu\not\in \{-\frac{15}{2},\frac{7}{2}\}$ and
$$\mu\not\in[-r-\frac{1}{r}-4, r+\frac{1}{r}].$$
%$\deg(V)<-(r^2+4r+1)$ and $(\deg(V),r)\neq (-15,2)$.
Then 
\begin{itemize}
\item either $c_1(V)=D_{d+r,-r}(y)$ with respect to an appropriate geometric marking,
\item or $V\simeq p^*\Om_{\P^2}(-1)$, where $p:X\to \P^2$ is an appropriate blowdown (in this case $r=2$, $\mu=-15/2$),
\item or $V\simeq R(p^*\Om_{\P^2}(-1))\simeq (p^*T_{\P^2}(1))(-Q)$ (in this case $r=2$, $\mu=7/2$).
\end{itemize} 
\end{theorem}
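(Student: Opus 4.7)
The plan is to reduce, via pullback to a degree-1 del Pezzo surface $X_1$, to the lattice-geometric machinery of Lemmas \ref{Vor-lem}, \ref{W-D5-Vor-lem}, and \ref{alcove-lem}.

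First, by applying the rotation $R$ on $X_4$ if necessary (an involution that interchanges the two sporadic conclusions), we may assume $\mu>r+1/r$, so $d:=\deg V>r^2+1$. Let $\pi:X_1\to X_4$ be the blow-up at three generic points; by Lemma \ref{dP-lem}(iii), the pair $(\pi^*V,\OO)$ remains exceptional on $X_1$ with the same rank and degree. Using the dihedral $\langle M, R\rangle_{X_1}$-action together with $W_{E_8}$-symmetries that change the blowdown structure $X_1\to X_4$, I would reduce to an exceptional pair $(V',\OO)$ on $X_1$ whose slope lies in $(-1,0)$ and has Voronoi parameters $(d', r')=(r, d+r)$ (slope $-r/(d+r)$) with respect to the resulting blowdown. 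Writing $c_1(V')=-d' Q_1+v'$ with $v'\in Q_1^\perp=\La_{E_8}$, Lemma \ref{Vor-lem} gives $(v',v')=r'^2-r'd'+d'^2+1$ and places $v'/r'$ in the Voronoi cell. The descent of $V'$ from $X_4$ forces $v'\cdot e_i$ to be the same constant for the three exceptional divisors $e_5,e_6,e_7$ of the new blowdown, translating to $(v'/r',\alpha_7)=(v'/r',\alpha_8)=0$; a direct computation using $\wt\alpha=\omega_8$ and the explicit form of $v'$ gives the third condition $(v'/r',\wt\alpha)=1$. Lemma \ref{W-D5-Vor-lem} then provides a $W_{D_5}$-element, corresponding to a further change of blowdown structure of $X_4$, placing $v'/r'$ in the fundamental alcove $A\sub \La_{E_8}\ot\R$.

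In the main regime $d>r^2+3r+1$ (equivalently $r'>(d')^2+4d'+1$), Lemma \ref{alcove-lem} forces $(v',\omega)=2r'-d'$; its sporadic alternative $(d',r')=(2,15)$ would yield $(v',\omega)=r'/3$, which is incompatible with $(v'/r',\wt\alpha)=1$ and is thus ruled out. Combined with the earlier constraints this uniquely determines $v'$, and reversing the transformations yields $c_1(V)=D_{d+r,-r}$ for the appropriate blowdown. In the residual gap $r^2+1<d\le r^2+3r+1$, which for each rank gives only finitely many pairs $(d,r)$, I would enumerate directly the vectors $v'\in\La_{E_8}$ of norm $r'^2-r'd'+d'^2+1$ that lie in the Voronoi cell and satisfy the three hyperplane conditions, and check that the only exceptional pair beyond the standard form arises at $(r,d)=(2,7)$, corresponding to $V\simeq R(p^*\Om_{\P^2}(-1))$. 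The main obstacle will be this finite case analysis in the residual gap, together with the verification of the $\wt\alpha$-hyperplane condition and careful bookkeeping of how $c_1$ and the blowdown structure transform under the successive operations.
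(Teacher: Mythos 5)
Your overall strategy is the same as the paper's: pull back to a degree $1$ del Pezzo, apply an $S$-type transformation to land in the range where Lemma \ref{Vor-lem} applies, use the descent condition to place $v'/r'$ on the three hyperplanes \eqref{3-hyperplanes-eq}, move into the fundamental alcove by $W_{D_5}$, and invoke Lemma \ref{alcove-lem}. But your choice of normalization creates two genuine gaps.

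First, reducing to $\mu>r+1/r$ and landing on parameters $(d',r')=(r,d+r)$ gives $r'>(d')^2+4d'+1$ only when $d>r^2+3r+1$, whereas the hypothesis only gives $d>r^2+1$. The residual range $r^2+1<d\le r^2+3r+1$ contains roughly $3r$ values of $d$ for \emph{every} rank $r$, so it is an infinite family and cannot be disposed of by direct enumeration; no uniform argument is offered. The paper avoids this entirely by normalizing to the other end of the interval, $\mu<-r-\frac{1}{r}-4$: there $S$ produces $(d_1,r_1)=(r,-d)$ with $r_1=-d>r^2+4r+1=d_1^2+4d_1+1$, which is \emph{exactly} the hypothesis of Lemma \ref{alcove-lem}, with no leftover cases. (Your reduction is fixable --- apply $R$ first and then $S$ --- but as written the argument does not close.)

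Second, your dismissal of the $(d',r')=(2,15)$ alternative of Lemma \ref{alcove-lem} is incorrect. The exceptional vector there is $v_1=5\omega_2$, and $\omega_2/3$ \emph{does} satisfy $(x,\wt\alpha)=(x,\omega_8)=1$ (indeed $(\omega_2,\omega_8)=3$), so it is not incompatible with the third hyperplane condition; in the paper this alternative is precisely where the sporadic bundle $p^*\Om_{\P^2}(-1)$ arises and is one of the conclusions of the theorem, not something to be ruled out. In your normalization the $(2,15)$ alternative occurs at $(r,d)=(2,13)$, $\mu=13/2$, where the theorem asserts there is \emph{no} sporadic bundle, so you would need a correct argument (a congruence/realizability argument, not the hyperplane condition) to exclude the candidate $5\omega_2$ there. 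Meanwhile the genuine sporadic case $(r,d)=(2,7)$, $\mu=7/2$ falls into your unhandled residual range. So the two places where the sporadic phenomena actually live are exactly the two places your argument does not correctly treat. You are also missing the final uniqueness step of the paper: in the main regime one must show the lattice constraints pin down $v'$ uniquely (the paper does this via the three unit vectors $u_1,u_2,u_3$ of Lemma \ref{unit-lem} having distinct residues mod $\La_{E_8}$), which is what turns the candidate list into the conclusion $c_1(V)=D_{d+r,-r}$.
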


\begin{proof}
Since $\mu(R(V))=-\mu-4$, it is enough to prove the assertion assuming that $\mu>r+1/r$.
Since an exceptional bundle in $\lan \OO_X\ran^\perp$ is determined by its rank and $c_1$, it is enough to check that for this range
either $c_1(V)=D_{d+r,-r}(y)$ or $(r,d)=(2,7)$ and $c_1(V)=5h-2Q$ for some geometric marking.
%$\mu<-r-1/r-4$.

Recall that by Proposition \ref{C5-orth-prop}, we have an equivalence $\Phi:D^b(\Coh \CC)\rTo{\sim} \lan\OO_X\ran^\perp$, where $\CC=\CC_5$ is a weighted projective line.
Let $V'=\Phi^{-1}(V)\in D^b(\Coh\CC)$. Then $r':=\rk_\CC(V')=2r+d>0$ and $d':=\deg_\CC(V')=-r$, so changing
$\Phi$ by an even shift we can assume $V'$ to be a vector bundle on $\CC$. Our assumption $d/r>r+1/r$ is equivalent to $r'>(1-d')^2$.

Applying Proposition \ref{C5-omnipresent-prop} we deduce that either $V'$ is right orthogonal to one of the objects $(S_i,S'_i)$, or
$(r,d)=(2,7)$ and $\pi[V']$ is in the $W(D_5)$-orbit of $-\frac{5}{2}\sum_{i=1}^5 \eps_i$. 
In the latter case, we get that with respect an appropriate geometric marking,
$$c_1(V)=\frac{7}{4}Q-\frac{5}{2}\sum_{i=1}^5 \eps_i=5h-2Q$$
(the coefficient of $Q$ is determined by the degree of $V$).

In the former case we get that either $\Hom^*(S_i,V')=0$ or $\Hom^*(S'_i,V')=0$ for some $i$.
Applying the equivalence $\Phi$, we get that either $\Hom^*(\OO_X(e_i-h),V)=0$ or $\Hom^*(\OO_X(Q-e_i+h),V)=0$ for some $i$.
But the classes $e_i-h=-Q/2+\eps_i$ and $-Q-e_i+h=-Q/2-\eps_i$, where $i=1,\ldots,5$, form a single $W(D_5)$-orbit.
Thus, choosing an appropriate geometric marking, we can assume that $\Hom^*(\OO_X(-f),V)=0$ (recall that $f=h-e_5$).
But then $V$ is in the subcategory $\lan \OO_X(-f),\OO_X\ran^\perp$, which by Proposition \ref{C4-orth-prop} is equivalent to $D^b(\Coh \CC_4)$.
Finally, the description of exceptional classes in $K_0(\CC_4)$ (see Lemma \ref{C4-exc-lem}) implies that $c_1(V)$ is of the form $D_{d+r,-r}(y)$. 
\end{proof}

%\begin{remark}
%In Theorem \ref{main-thm} we construct two exceptional pairs $(V,\OO)$: one with $c_1(V)=D_{d+r,-r}$ and one with $c_1(V)=D_{-r,d+r}$.
%This does not contradict Theorem \ref{sporadic-thm} since $D_{d+r,-r}$ and $D_{-r,d+r}$ are in the same $W_{D_5}$-orbit (see Lemma \ref{W-orbit-lem}).
%\end{remark}

\section{Exceptional pairs on del Pezzo surfaces of degree $\ge 5$}

For each $k$, $5\le k\le 9$, it is easy to check that not every slope $\mu=d/r$ occurs as $\mu(V)$ for an exceptional pair $(V,\OO)$.

\begin{prop}\label{qu-ineq-prop} 
Let $V$ be an exceptional bundle on a del Pezzo surface of degree $k$, where $k\ge 5$, such that $(V,\OO)$ is an exceptional
pair. Then $d=\deg(V)$ and $r=\rk(V)$ satisfy
\begin{equation}\label{spor-slope-ineq}
d^2+krd+kr^2\ge -k.
\end{equation}
In particular, for $k$ odd, the case $d=(-kr+1)/2$ does not occur for any odd rank $r\ge 3$; while for $k$ even, the case $d=-\frac{k}{2}r+1$ does not
occur for any $r\ge 2$.
\end{prop}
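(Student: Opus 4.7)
The plan is to combine Lemma \ref{semiorth-lem} with the Hodge index theorem. First I would observe that the conclusion is automatic whenever the slope of $V$ lies outside the window $-k < \mu(V) < 0$. Indeed, if $d \ge 0$, then all three summands of $d^2 + krd + kr^2$ are nonnegative; and if $d \le -kr$, then $d^2 + krd = d(d+kr) \ge 0$ as a product of two nonpositive integers, so again $d^2 + krd + kr^2 \ge kr^2 > -k$. Thus I may restrict attention to the range $-kr < d < 0$.

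In this range we have $\mu(\OO) - k < \mu(V) < \mu(\OO)$, so Lemma \ref{semiorth-lem} applied to the pair $(V,\OO)$ shows that the hypothesis of exceptionality of $(V,\OO)$ is equivalent to $\chi(\OO,V) = \chi(V) = 0$. By the Euler characteristic formula \eqref{chi-V-eq} this translates into
$$c_1(V)^2 = -r^2 - rd - 1.$$
I would then invoke the Hodge index inequality $(D \cdot H)^2 \ge D^2 \cdot H^2$ for any ample $H$, applied with $H = -K$ (so that $H^2 = k$) and $D = c_1(V)$. This yields
$$d^2 = (c_1(V) \cdot (-K))^2 \ge k \cdot c_1(V)^2 = -k(r^2 + rd + 1),$$
which rearranges to the desired bound $d^2 + krd + kr^2 \ge -k$.

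For the ``in particular'' clause it remains to substitute the two prescribed values of $d$ into the quadratic $d^2 + krd + kr^2$ and check that the resulting expression falls below $-k$. A short calculation yields $(kr^2(4-k)+1)/4$ when $d = (-kr+1)/2$ and $kr^2(4-k)/4 + 1$ when $d = -kr/2 + 1$, so for $k \ge 5$ the requirement that these be $\ge -k$ becomes an explicit upper bound on $r^2$ which is violated as soon as $r \ge 3$ (respectively $r \ge 2$). I do not foresee any real obstacle: the whole argument is driven by the observation that the forbidden slopes lie precisely where $\chi(V) = 0$ and Hodge index becomes the binding constraint; everything else is bookkeeping.
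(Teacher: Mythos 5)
Your proof is correct and is essentially the paper's argument: both reduce to $\chi(V)=0$, apply Riemann--Roch to get $c_1(V)^2=-r^2-rd-1$, and conclude via the Hodge index theorem (your inequality $(c_1(V)\cdot(-K))^2\ge k\,c_1(V)^2$ is exactly the paper's decomposition $c_1(V)=\frac{d}{k}Q+\a$ with $\a^2\le 0$). The only cosmetic difference is that you obtain $\chi(V)=0$ via Lemma \ref{semiorth-lem} together with a case split on the slope, whereas it follows immediately from the definition of the exceptional pair ($R\Hom(\OO,V)=0$ forces $\chi(\OO,V)=\chi(V)=0$), so the preliminary case analysis is unnecessary.
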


\begin{proof}
For an exceptional bundle $E$ such that $\chi(E)=0$, we get from \eqref{chi-V-eq} that
$$1=\chi(E,E)=-r^2-c_1^2-rd,$$
where $r=\rk(E)$, $d=c_1(E)\cdot Q$.
We have 
$$c_1(E)=\frac{d}{k}Q+\a,$$ where $\a\in \lan Q\ran^\perp\sub \Pic(X)_{\Q}$. Then we can rewrite the above identity as
$$\frac{d^2}{k}+rd+r^2=-1-\a^2.$$
Since $\a^2\le 0$ by the Hodge index theorem, this gives the claimed inequality.
\end{proof}

Note that in the case $k=9$ (i.e., for $\P^2$) the inequality \eqref{spor-slope-ineq} becomes an equality
\begin{equation}\label{P2-constraint-eq}
d^2+9rd+9r^2=-9,
\end{equation}
where $d=\deg(V)$ is divisible by $3$.
In this case, using the fact that all exceptional objects in $\lan \OO_{\P^2}\ran^\perp$ lie in the helix generated by $(\OO_{\P^2}(-2),\OO_{\P^2}(-1))$,
one can determine all the slopes $\mu(V)$ that occur for $(V,\OO)$. 

More generally, for $k\ge 5$, we can consider relatively prime
%slopes $\mu=d/r$ in the interval $(\frac{-k-\sqrt{k(k-4)}}{2},\frac{-k+\sqrt{k(k-4)}}{2})$.
$(d,r)$ satisfying
\begin{equation}\label{forb-int-eq}
-k\le d^2+krd+kr^2\le -1.
\end{equation}
We will prove that all of them arise as $(\deg(V),\rk(V))$ from exceptional pairs $(V,\OO)$ (with the restriction that $d$ is divisible by $3$ if $k=9$).

\begin{prop}\label{higher-deg-dP-prop}
(i) Let $X_k$ be a del Pezzo surface of degree $k$, where $5\le k\le 7$. Then for every relatively prime $(d,r)$, with $r>0$, satisfying
\eqref{forb-int-eq}, there exists an exceptional pair $(V,\OO)$ on $X_k$ with $r=\rk(V)$, $d=\deg(V)$.

\noindent
(ii) In the case $k=8$, for $X_8\simeq F_1$ (the Hirzebruch surface) any relatively prime pair $(d,r)$ satisfying \eqref{forb-int-eq}, 
arises from an exceptional pair $(V,\OO)$ on $X_8$. In the case $X_8\simeq \P^1\times \P^1$, such a pair $(d,r)$ arises from an exceptional pair on $X_8$
if and only if $d$ is even.

\noindent
(iii) For $X=\P^2$ we can realize in this way every pair $(d,r)$ satisfying \eqref{P2-constraint-eq}.
\end{prop}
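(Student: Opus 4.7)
The plan is to address the three parts separately, using throughout the dihedral action generated by the operations $M$ and $R$ on exceptional pairs $(V,\OO)$ (Lemma \ref{M-R-lem}). A direct computation from the rank and slope formulas shows that both $M$ and $R$ preserve the quadratic form $d^2 + krd + kr^2$; hence the admissible region \eqref{forb-int-eq} is stable under the dihedral action, and for $k\ge 5$ the generator $RM$ is hyperbolic on the slope line (Remark \ref{crit-rem}), so each orbit in that region contains a representative of bounded, explicit rank. It therefore suffices to exhibit exceptional bundles for finitely many representative pairs.

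For part (iii), with $X=\P^2$, the equation \eqref{P2-constraint-eq} becomes, after setting $d=3d_0$, the Markov-type relation $d_0^2+3rd_0+r^2=-1$ governing the Drezet--Le Potier exceptional bundles on $\P^2$. I would cite this classification to obtain an exceptional bundle $V$ with each admissible $(d,r)$, then use the helix structure of the full exceptional collection $(\OO(-2),\OO(-1),\OO)$ to produce a unique twist $V(n)$ with $\chi(V(n))=0$ and vanishing cohomology, yielding the required pair $(V(n),\OO)$.

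For part (ii), I would handle the two models of $X_8$ separately. On $\P^1\times\P^1$, since $-K=2s+2f$, every class has even intersection with $-K$, forcing $d$ even; for admissible pairs with $d$ even, the dihedral reduction lands at a base case handled by a line bundle or by a small-rank bundle built from the Beilinson collection $(\OO,\OO(s),\OO(f),\OO(s+f))$. On $F_1$, the $(-1)$-curve removes the parity obstruction, and the same style of construction covers every admissible $(d,r)$.

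For part (i) with $k=5,6,7$, I would combine Theorem \ref{main-thm} with blowdown. Present $X_4$ as an iterated blowup $\pi\colon X_4\to X_k$ of $k-4\in\{1,2,3\}$ points; Theorem \ref{main-thm} then produces on $X_4$ an exceptional pair $(V_0,\OO)$ with prescribed $(d,r)$ and $c_1(V_0)$ in the $W(D_5)$-orbit of $D_{d-r,r}$. By Lemma \ref{dP-lem}(iii), $V_0$ descends to an exceptional bundle on $X_k$ whenever $c_1(V_0)\in\pi^*\Pic(X_k)$, and the descent preserves the exceptional-pair property with $\OO$ by the projection formula. The main obstacle I anticipate is the arithmetic step of showing that \eqref{forb-int-eq} is exactly the condition under which some $W(D_5)$-translate of $D_{d-r,r}$ is orthogonal to the classes of the $k-4$ contracted $(-1)$-curves: the upper bound $-1$ must exclude precisely the sporadic configurations of Theorem \ref{sporadic-thm} that genuinely fail to descend, while the lower bound $-k$ reflects the Hodge-index constraint of Proposition \ref{qu-ineq-prop}. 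Once this finite lattice matching is verified for each $k$, the descended pair $(V,\OO)$ on $X_k$ realizes the required invariants.
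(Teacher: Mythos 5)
Your high-level strategy for parts (ii) and (iii) --- reduce via the dihedral action of $M$ and $R$, which indeed preserves $d^2+krd+kr^2$, to finitely many base cases --- is the paper's strategy, but the step that actually makes it work is missing. Hyperbolicity of $RM$ on the slope line does not by itself give "a representative of bounded, explicit rank" in each orbit (the fixed points are precisely the endpoints of the interval cut out by \eqref{forb-int-eq}, so naive iteration pushes orbits toward the boundary rather than to small rank). The paper's device is to set $\xi(d,r)=-d+r\frac{-k+\sqrt{k(k-4)}}{2}$ in the quadratic order $O_k$ of discriminant $k(k-4)$, so that $d^2+krd+kr^2=\Nm(\xi)$ and the operations become $\xi\mapsto -\sigma\xi$ and $\xi\mapsto u\xi$ for a unit $u$. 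This turns the orbit enumeration into a finite computation: one first shows that only certain norms in $[-k,-1]$ can occur at all (e.g.\ for $k=8$ the norm is a square mod $8$, hence in $\{-8,-7,-4\}$; for $k=7$ only $-5,-3$; for $k=6$ only $-3,-2$; for $k=5$ only $-5,-1$), and then that each occurring norm is a single orbit under units and conjugation, landing on an explicit line bundle --- except for $k=5$, norm $-5$, whose base case is $(r,d)=(2,-5)$ and requires the genuinely non-line-bundle construction $\UU|_{X_5}$ from $G(2,5)$, which your sketch never produces. Without this enumeration you neither know which $(d,r)$ are reachable nor which base pairs you must construct, and the "only if" direction in (ii) for $\P^1\times\P^1$ (your parity remark) is the only part that is complete as written.

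For part (i) you take a genuinely different route (descent from $X_4$ via Theorem \ref{main-thm} and Lemma \ref{dP-lem}(iii)), and you correctly identify its weak point yourself: one must show that for every $(d,r)$ satisfying \eqref{forb-int-eq} some $W(D_5)$-translate of $D_{d+r,-r}$ is orthogonal to $k-4$ pairwise disjoint $(-1)$-curves on $X_4$. This is a real lattice-theoretic claim, not a formality --- it does hold in the examples I checked (e.g.\ $D_{-3,-2}\cdot(f-e_4)=0$ for $(r,d)=(2,-5)$, $k=5$), but verifying it uniformly over all admissible $(d,r)$ and all $k\in\{5,6,7\}$ is essentially equivalent in difficulty to the orbit enumeration above, and you have not done it. The paper avoids the issue entirely by running the norm reduction directly on $X_k$ and realizing the few base cases as pull-backs of explicit line bundles from $F_1$ (plus the $G(2,5)$ case). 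Part (iii) is fine in outline: the Dr\'ezet--Le Potier/helix classification on $\P^2$ together with the stability vanishing of Lemma \ref{semiorth-lem} (note $\mu\in(-9,0)$ is automatic from \eqref{P2-constraint-eq}, so $H^0=H^2=0$ and $\chi=0$ forces $H^1=0$) does yield the pairs, and is a legitimate alternative to the paper's reduction to $\OO(-2)$ via the unit $u_f$.
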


\begin{proof}
Let us consider the quadratic order of discriminant $k(k-4)$,
$$O_k:=\Z+\Z\frac{-k+\sqrt{k(k-4})}{2}\sub \Q(\sqrt{k(k-4)}),$$
and let $\si:O_k\to O_k$ denote the Galois conjugation.
For every $V$ with $(\deg(V),\rk(V))=(d,r)$, we set
$$\xi(V)=\xi(d,r):=-d+r\frac{-k+\sqrt{k(k-4)}}{2}\in O_k.$$
Note that
$$\Nm(\xi(V))=d^2+krd+kr^2.$$
 
Let us consider two operations \eqref{mu-transf-eq} on exceptional pairs $(V,\OO)$ (where we allow $V$ to be any exceptional object of the derived category).
It is easy to check that
\begin{equation}\label{conj-unit-op-eq}
\xi(R(V))=-\si\xi(V), \ \ \xi(RM(V))=u\cdot \xi(V),
\end{equation}
where $u=-1+k/2+\sqrt{k(k-4)}/2$ is a unit in $O_k$.

Now we consider the case of each $k$ separately.

\noindent
{\bf Case $k=9$.}
% with $d$ divisible by $3$
In this case $O_9=\Z+\Z\cdot 3\frac{-1+\sqrt{5}}{2}$ has conductor $3$ in the ring of integers $O\sub \Q(\sqrt{5})$. 
Note that $u=u_f^4$, where $u_f=(1+\sqrt{5})/2$ is the fundamental unit in $O$.
We are looking for elements of $O_9$ of norm $-9$. Since $3$ does not split in $O$, they are of the form $\pm 3u_f^{2n+1}$.
Thus, up to the action of the operations \eqref{conj-unit-op-eq}, we can reduce to the case $\xi(V)=\pm 3u_f$, in which case
$V$ is a shift of $\OO(-2)$.

\noindent
{\bf Case $k=8$.}
In this case $O_8=\Z+\Z\cdot 2\sqrt{2}$ has conductor $2$ in $O=\Z[\sqrt{2}]$, and
$u=u_f^2$, where $u_f=1+\sqrt{2}$ is the fundamental unit in $O$. We are looking for elements of $O_8$ with norm in $[-8,-1]$.
Since a norm of an element in $O_8$ is a square modulo $8$, the norm is actually either $-8$, $-7$, or $-4$.

If $\Nm(a)=-8=-(\sqrt{2})^6$ for $a\in O$, then $a=(\sqrt{2})^3v$, where $v\in O$ has $\Nm(v)=1$, so $v=\pm u_f^{2n}$. Hence, our operations \eqref{conj-unit-op-eq}
(together with the shift) reduce to the case $\xi(V)=2\sqrt{2}$, i.e., $\rk(V)=1$, $\deg(V)=-4$.
Similarly, if $\Nm(a)=-7$, then $a=(-1+2\sqrt{2})v$, where $\Nm(v)=1$, so we can reduce to $\xi(V)=-1+2\sqrt{2}$, i.e., $\rk(V)=1$, $\deg(V)=-3$.
Finally, if $\Nm(a)=-4$ then $a=2v$, where $v=\pm u_f^{2n+1}$, so we can reduce to $\xi(V)=2(-1+\sqrt{2})$, i.e., $\rk(V)=1$, $\deg(V)=-2$.

Thus, the question reduces to which of these degrees can occur for a line bundle $L$ forming an exceptional pair $(L,\OO)$. For $X_8=\P^1\times\P^1$, the degree
is always even, and both degrees $-2$ and $-4$ occur (for $L=\OO(-1,0)$ and $L=\OO(-1,-1)$). On the other hand, it is easy to see that for $X_8=F_1$, the exceptional
pair $(L,\OO)$ should have $L=\OO(-s-nf)$ or $L=\OO(-f)$, where $f$ is the class of a fiber of $F_1\to \P^1$, and $s$ is the class of a section, with $s^2=-1$. Since $Q=2s+3f$,
in the former case we have $\deg(L)=2n-1$, while in the latter case $\deg(L)=-2$. Thus, for $X_8=F_1$, all three degrees occur.

\noindent
{\bf Case $k=7$.}
In this case $O_8=O=\Z+\Z\frac{\sqrt{21}-1}{2}$ is the ring of integers in $\Q(\sqrt{21})$, and we are looking for elements of $O$ with the norm in $[-7,-1]$.
We have $u=u_f=\frac{5+\sqrt{21}}{2}$.

Since the norm is a square modulo $7$, the norm has to be in $\{-7,-6,-5,-3\}$. It is easy to rule out $-7$. Also, $d^2+7rd+7r^2$ can be even only if both $d$ and $r$ are even.
Solutions of $\Nm(a)=-5$ are of the form $a=\pm \frac{-1+\sqrt{21}}{2}\cdot u_f^n$, so we can reduce the problem to $\xi(V)=\frac{-1+\sqrt{21}}{2}$, i.e., $\rk(V)=1$, $\deg(V)=-3$.
Similarly, solutions of $\Nm(a)=-3$ are of the form $a=\pm \frac{-3+\sqrt{21}}{2}\cdot u_f^n$, so we can reduce to $\xi(V)=\frac{-3+\sqrt{21}}{2}$, i.e., $\rk(V)=1$, $\deg(V)=-2$.

In both cases we can take $(V,\OO)$ to be the pull-back of the similar pair on $F_1$ under the blow up map $X_7\to X_8\simeq F_1$.
%, where we use $X_8=\P^1\times \P^1$ for $\deg=-2$ and $X_8=F_1$ for $\deg=-3$.

\noindent
{\bf Case $k=6$.}
In this case $O_6=\Z[\sqrt{3}]$, we have $u=u_f=2+\sqrt{3}$, and we are looking for elements with the norm in $[-6,-1]$.
It is easy to see that the only two possibilities are $\Nm(a)=-3$ and $\Nm(a)=-2$. The solutions are of the form $\pm \sqrt{3}u_f^n$ and $\pm (-1+\sqrt{3})u_f^n$,
so we reduce to the cases $(r,d)=(1,-3)$ and $(r,d)=(1,-2)$. Both cases are realized as pull-backs under the blow up map $X_6\to X_7$.
%Since the norm in $O_6$ is a square modulo $3$, and the norm is divisible by $5$ only if $r$ and $d$ are divis

\noindent
{\bf Case $k=5$.}
In this case $O_5=O=\Z+\Z\frac{-1+\sqrt{5}}{2}$, and $u=u_f^2$, where $u_f=(1+\sqrt{5})/2$. The possible norms in $[-5,-1]$ (with $(r,d)$ relatively prime)
are $-5$ and $-1$. The norm $-5$ is realized by elements of the form $\pm \sqrt{5}u_f^{2n}$, while the norm $-1$ is realized by elements of the form $\pm u_f^{2n+1}$.
Thus, we reduce to the cases $(r,d)=(2,-5)$ and $(r,d)=(1,-2)$. The latter case is realized as the pull-back under the blow up map $X_5\to X_6$.

It remains to find an exceptional pair $(V,\OO)$ with $\rk(V)=2$ and $\deg(V)=-5$. For this we can realize $X=X_5$ as the linear section of $G(2,5)$ and take the restriction to $X$
of the exceptional pair $(\UU,\OO)$ on $G(2,5)$, where $\UU\sub \OO^5$ is the universal subbundle (the needed cohomology on $X$ are easily computed using the Koszul
resolution for $\OO_X$ on $G(2,5)$).
\end{proof}

\begin{remark} 
For most values $(d,r)$, the pull-backs of the exceptional pairs $(V,\OO_{X_k})$ considered in Proposition \ref{higher-deg-dP-prop} to a del Pezzo surface of degree $4$,
are sporadic (i.e., do not appear from Theorem \ref{main-thm}). Indeed, by Proposition \ref{Dab-descent-lem}, this happens as soon as $r>2$ and $d<-r-2$.
\end{remark}

\section{Application to bihamiltonian structures}

\subsection{Nodal anticanonical divisors on del Pezzo surfaces}\label{pencils-sec}

%It is well known that if $C\sub X$ is a reducible anticanonical divisor on a del Pezzo surface then every irreducible component of $C$ is rational (see e.g., 
%\cite[Lem.\ 8.1.22]{Dolg}).

\begin{lemma}\label{sing-pt-lem}
Let $X$ be a del Pezzo surface of degree $k\ge 3$.
Then there exists a nonempty open subset $U\sub X$, such that for every $q\in U$, there exists an irreducible anticanonical curve $C\sub X$, such that $q$
is a singular point of $C$.
\end{lemma}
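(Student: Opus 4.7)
The plan is to leverage the very ampleness of $-K$ for $k\ge 3$: embed $X\hookrightarrow\P^k:=\P(H^0(X,-K)^*)$ via $|-K|$ and work with tangent hyperplane sections. Concretely, consider the projective conormal variety
\[
\TT:=\{(q,H)\in X\times |-K|\,:\, T_qX\subset H\},
\]
a $\P^{k-3}$-bundle over $X$ via the first projection $\pi_1$, hence irreducible of dimension $k-1$. The second projection $\pi_2$ sends $(q,H)$ to the anticanonical divisor $C:=H\cap X$, which is singular at $q$ (since $H$ contains the projective tangent plane to $X$ at $q$); conversely every singular $C\in|-K|$ arises this way with $H$ uniquely determined by its singular point. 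Thus $\pi_2(\TT)=\De$, the discriminant locus of singular members of $|-K|$, which is irreducible of dimension $k-1$, and $\pi_2:\TT\to\De$ is generically finite (both spaces have the same dimension).

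Next I would bound the dimension of the reducible locus $R\subset|-K|$. Each component of $R$ is the image of $|A|\times|B|\to|-K|$ for a nontrivial effective decomposition $-K=A+B$. By the Hodge index theorem applied to the ample class $-K$, one has $A\cdot B\ge 1$; moreover, the identity $A\cdot B=-(A^2+A\cdot K_X)=2-2p_a(A)$ (valid since $A+B=-K$) shows $A\cdot B$ is always even, hence $A\cdot B\ge 2$. Combined with Riemann-Roch and the Kawamata-Viehweg vanishing $h^i(A)=h^i(B)=0$ for $i\ge 1$ (which applies because $A-K_X$ is big and nef on the del Pezzo surface), this gives $h^0(A)+h^0(B)=\chi(A)+\chi(B)=k+2-A\cdot B$, so $\dim(|A|+|B|)\le h^0(A)+h^0(B)-2\le k-2$. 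Therefore $\dim R\le k-2$.

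The conclusion is a dimension count on $\TT$. Since $\pi_2$ is generically finite, the preimage $\TT^R:=\pi_2^{-1}(R)\cap\TT$ has dimension at most $\dim R\le k-2$. Projecting $\TT^R$ to $X$ via $\pi_1$: if its image has dimension $<2$ then the generic $q\in X$ lies outside this image, while if the image equals $X$ the generic fiber has dimension $\le(k-2)-2=k-4$, strictly less than the full fiber dimension $k-3$ of the $\P^{k-3}$-bundle $\pi_1:\TT\to X$. In either case, for $q$ in a nonempty open $U\subset X$ the fiber $\TT_q$ is not contained in $\TT^R$, so some $(q,H)\in\TT$ has $C:=H\cap X$ irreducible; this $C$ is the desired irreducible anticanonical curve singular at $q$. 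The main subtlety is rigorously justifying the dimension bound $\dim R\le k-2$: both the Kawamata-Viehweg vanishing for $A-K_X$ and the parity-based inequality $A\cdot B\ge 2$ require verifying some standard facts about intersection theory on del Pezzo surfaces of degree $\ge 3$.
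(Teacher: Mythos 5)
Your approach (a dimension count on the conormal incidence variety $\TT$ versus the reducible locus $R\subset|-K|$) is genuinely different from the paper's, which is completely explicit: for $k=3$ it takes $U$ to be the complement of the $27$ lines and uses the tangent plane section $C_q=H_q\cap X$ at $q\in U$, noting that a reducible plane cubic has all of its singular points on a line component, so $C_q$ must be irreducible; for $k>3$ it reduces to the cubic surface case via a blowdown/blowup. Your route could in principle work, but as written it has a genuine gap at the key step ``$\dim\TT^R\le\dim R$''. Generic finiteness of $\pi_2\colon\TT\to\De$ controls only the fiber over a \emph{general} point of $\De$; it gives no bound on the fibers of $\pi_2$ over the special closed subset $R$. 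And those fibers do jump: the fiber of $\pi_2$ over $[C]$ is $\mathrm{Sing}(C)$, which is one-dimensional whenever $C$ is non-reduced. So to get $\dim\TT^R\le k-2$ you must separately show that the non-reduced locus of $|-K|$ has dimension at most $k-3$ (your computation of $\dim(|A|+|B|)\le k-2$ only covers the reduced reducible members, over which the fibers are finite). This extra bound is plausible but is an additional calculation of the same kind, not a consequence of what you wrote.

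Two further points need attention. First, your assertion that $\De$ has dimension $k-1$ and that $\pi_2$ is generically finite is circular as stated (each is being deduced from the other); what is actually needed is the existence of at least one singular anticanonical divisor with finite singular locus, i.e.\ a reduced singular member --- true, but requiring an argument (and once you exhibit, say, a reduced tangent hyperplane section, you are close to the paper's explicit construction anyway). Second, the vanishing $h^1(A)=0$, which you correctly flag, cannot be quoted from Kawamata--Viehweg without care: the paper works over an arbitrary algebraically closed field (note its separate treatment of the Fermat cubic in characteristic $2$), and $A-K_X$ need not be nef when $A$ has components of negative self-intersection. These vanishings do hold on del Pezzo surfaces, but they must be justified by characteristic-free arguments. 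By contrast, the paper's proof avoids all of these issues at the cost of being specific to the cubic surface model.
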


\begin{proof} 
In the case $k=3$, $X$ is a cubic surface in its anticanonical embedding into $\P^3$. Then we can take $U\sub X$ to be the complement to the union of $27$ lines on $X$.
For every $q\in U$, let $H_q\sub \P^3$ be the tangent plane to $X$.
The intersection $C_q:=H_q\cap X$ is an anticanonical divisor, singular at $q$. Since $C_q$ is a cubic in $H_q$, and $q$ does not lie on a line contained in $X$, $C_q$ is irreducible.

In the case $k>3$, let $\pi:X'\to X$ be the blow up map, where $X'$ is a smooth cubic surface. Then we can take $U\sub X$ to be the complement to the image of the union of the lines on $X'$.
%preimage of the above open set for $X'$.
%It is enough to consider the case $k=3$. Given $q$ which is not contained in any $(-1)$-curve, let $X'\to X$ be the blow up at $q$, $e\sub X'$ the exceptional
%divisor. Then $(Q_2-e)^2=-1$ and $(Q_2-e)\cdot Q_2=1$, 
%so there exists an anticanonical divisor on $X'$ of the form $C=e\cup e'$, where $e'$ is another $(-1)$-curve with $e\cdot e'=2$.
\end{proof}

The following lemma describes a special configuration of $6$ points in $\P^2$ that appears in characteristic $2$.
Let us denote by $X_3^F\sub\P^3$ the Fermat cubic $x_0^3+x_1^3+x_2^3+x_3^3=0$.

\begin{lemma}\label{X3F-lem}
Assume the characteristic is $2$.
Let $p_1,\ldots,p_6$ be an unordered configuration of points in $\P^2$, such that no $3$ are collinear. 
% and all $6$ points do not lie on a conic.
For $i=1,\ldots,6$, let $C_i$ denote the unique conic passing through $S_i:=\{p_1,\ldots,p_6\}\setminus\{p_i\}$, 
Assume that for every $i$, $C_i$ is tangent to every line through $p_i$. Then $p_1,\ldots,p_6$ is projectively equivalent
to the unique (up to $\PGL_3({\mathbb F}_4)$) unordered configuration of $6$ points on $\P^2({\mathbb F}_4)$,
such that no $3$ are collinear. The blow up of $\P^2$ at these $6$ points is isomorphic to $X_3^F$.
\end{lemma}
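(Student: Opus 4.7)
The plan is to use a projective transformation to normalize four of the six points and then exploit the characteristic-$2$ nucleus condition to pin down the remaining two as elements of $\P^2(\mathbb{F}_4)$.

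Recall that in characteristic $2$ every smooth conic has a unique \emph{nucleus}: the point through which all tangent lines pass. For a conic of the form $d\,xy+e\,xz+f\,yz=0$ this nucleus is $[f{:}e{:}d]$. The hypothesis that $C_i$ is tangent to every line through $p_i$ is therefore equivalent to $p_i$ being the nucleus of $C_i$. Since no three of the points are collinear, I would use $\PGL_3(k)$ to move four of them to the standard frame $[1{:}0{:}0],\,[0{:}1{:}0],\,[0{:}0{:}1],\,[1{:}1{:}1]$, and write $p_5=[u{:}v{:}w]$. The conic $C_6$ through the first five points has the form $d\,xy+e\,xz+f\,yz=0$; the conditions that it contain $[1{:}1{:}1]$ and $[u{:}v{:}w]$ determine $(d,e,f)$ up to scalar as $(w(u+v),\,v(u+w),\,u(v+w))$, so its nucleus (which must equal $p_6$) is $[u(v+w):v(u+w):w(u+v)]$.

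Imposing the dual condition that $p_5$ be the nucleus of $C_5$ and using $(a+b)^2=a^2+b^2$ in characteristic $2$, this reduces to the system $u^2=vw$, $v^2=uw$, $w^2=uv$, equivalently $u^3=v^3=w^3$. Hence $u{:}v{:}w$ is a ratio of cube roots of unity, which all lie in $\mathbb{F}_4$; excluding the forbidden solution $u=v=w$ (which would coincide with $p_4$), I obtain $p_5=[1{:}\omega{:}\omega^2]$ and $p_6=[1{:}\omega^2{:}\omega]$ (the two being interchangeable, since the configuration is unordered), where $\omega$ is a primitive cube root of unity. A direct calculation confirms that the remaining four nucleus conditions are automatically satisfied: for instance $C_1$ works out to be $x^2+yz=0$, whose nucleus is $[1{:}0{:}0]=p_1$, and similarly for $C_2,C_3,C_4$. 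The six points thus form a hyperoval in $\P^2(\mathbb{F}_4)$; since all $6$-point hyperovals in $\P^2(\mathbb{F}_4)$ lie in a single $\PGL_3(\mathbb{F}_4)$-orbit (a classical fact, as all arise as a conic together with its nucleus), uniqueness follows.

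To identify the blow up of $\P^2$ at the hyperoval with the Fermat cubic $X_3^F$: the anticanonical map embeds the blow up into $\P^3$ as a smooth cubic surface via the $4$-dimensional linear system of cubics through the six points, and I would produce an explicit basis in which the image satisfies $x_0^3+x_1^3+x_2^3+x_3^3=0$. This identification reflects the fact that in characteristic $2$ the Fermat cubic is the $\mathbb{F}_4$-Hermitian variety $\sum x_i\cdot x_i^2=0$, whose $27$ lines are all defined over $\mathbb{F}_4$ (since the cube roots of unity lie in $\mathbb{F}_4$); blowing down any six mutually disjoint such lines recovers $\P^2$ with the hyperoval appearing as their images. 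The main technical obstacle is this last explicit identification with $X_3^F$; everything prior reduces either to direct coordinate computation or to a standard fact about hyperovals in $\P^2(\mathbb{F}_4)$.
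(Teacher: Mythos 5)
The proposal is correct and follows essentially the same route as the paper: normalize four points to the standard frame, use the characteristic-$2$ nucleus (strange point) condition to force the coordinates of $p_5,p_6$ to be cube roots of unity, hence to lie in ${\mathbb F}_4$, and identify the blow up with $X_3^F$ via the fact that all $27$ lines of the Fermat (Hermitian) cubic are defined over ${\mathbb F}_4$. Two small remarks: your phrase ``equivalently $u^3=v^3=w^3$'' is strictly weaker than the system $u^2=vw$, $v^2=uw$, $w^2=uv$ (though the no-three-collinear constraints still single out the same two solutions), and the final identification that you flag as ``the main technical obstacle'' requires no explicit anticanonical basis --- your own closing remark that the $27$ lines are defined over ${\mathbb F}_4$, so that any blowdown sends six disjoint lines to six ${\mathbb F}_4$-points in general position (necessarily the unique such configuration), is precisely the paper's complete argument.
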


\begin{proof}
Without loss of generality we may assume that the first four points are
\begin{equation}\label{4pts-coord-eq}
p_1=(1:0:0), \ \ p_2=(0:1:0), \ \ p_3=(0:0:1), \ \ p_4=(1:1:1).
\end{equation}
Let $p_5=(x_0:y_0:1)$, $p_6=(x_1:y_1:1)$. Then the equation of $C_6$ is $q_6(x,y,z)=az(x+y)+y(x+z)=0$, where
$a=y_0(x_0+1)/(x_0+y_0)$. Hence, $p_6$ is the point where all derivatives of $q_6$ vanish,
which gives $x_1=a+1$, $y_1=a$. In particular, $x_1+y_1+1=0$. Exchanging the roles of $p_5$ and $p_6$ we see that $x_0+y_0+1=0$.
Hence, $a=x_0^2+1$, so 
$$p_5=(x_0:x_0+1:1), \ \ p_6=(x_0^2:x_0^2+1:1).$$
Exchanging the roles of $p_5$ and $p_6$ we deduce that $x_0^4=x_0$, hence all points are defined over ${\mathbb F}_4$.
Since no three points are collinear, we see that $x_0\neq 0,1$. Hence, $x_0$ is one of two roots of $x_0^2+x_0+1=0$. 
%Exchanging $p_5$ and $p_6$ leads to a different root. 
Hence, coordinates of all points are in ${\mathbb F}_4$.

Conversely, suppose we have $6$ points on $\P^2({\mathbb F}_4)$, such that no three are collinear, with the first $4$ points given by \eqref{4pts-coord-eq},
and $p_5=(x_0:y_0:1)$, $p_6=(x_1:y_1:1)$. Then $x_i,y_i\in {\mathbb F}_4\setminus \{0,1\}$ and $x_i\neq y_i$ for $i=1,2$, $x_1\neq x_0$. Hence, the points $p_5$ and
$p_6$ are uniquely determined up to permutation. 

On the other hand, all $27$ lines on the Fermat cubic $X_3^F$ are defined over ${\mathbb F}_4$. Thus, for any choice of the blow up morphism
$\pi:X_3^F\to \P^2$ the corresponding $6$ points in $\P^2$ are defined over ${\mathbb F}_4$. Hence, $X_3^F$ is isomorphic to the blow up of $\P^2$ at the above configuration
of $6$ points in $\P^2({\mathbb F}_4)$.
\end{proof}

Let $X$ be a del Pezzo surface of degree $k\ge 3$, $P:=|Q|\simeq \P^k$ the anticanonical linear system.
Let $\wt{P}\sub P\times X$ denote the incidence variety of $(C,p)$ such that $p\in C$.
We consider the following loci in $P$ and $\wt{P}$:
\begin{itemize}
\item
$P_{sing}$ (resp., $\wt{P}_{sing}$) is the locus of singular divisors (resp., of pairs $(C,p)$ such that $p$ is a singular point of $C$);
\item
$P_{red}\sub P_{sing}$ is the locus of reducible divisors;
\item
$P_{cusp}\sub P_{sing}$ is the locus of non-nodal divisors.
\end{itemize}

\begin{lemma}\label{sing-locus-lem} Assume that $k\ge 3$.

\noindent 
(i) The loci $P_{sing}$ (resp., $\wt{P}_{sing}$), $P_{red}$ and $P_{cusp}$ are closed. The varieties
%One has $\dim P_{red}\le k-2$, while
$\wt{P}_{sing}$ and $P_{sing}$ are irreducible of dimension $k-1$. 

\noindent
(ii) One has $P_{red}\neq P_{sing}$. Assume that $X\not\cong X^F_3/\bar{{\mathbb F}}_2$.
%either $k>3$, or characteristic is $\neq 2$, or $X\neq X_3^F$. 
Then $P_{cusp}\neq P_{sing}$.
\end{lemma}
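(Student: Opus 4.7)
All four loci are closed. Indeed, $\wt P_{sing} \subset P \times X$ is cut out by the vanishing of the universal section of $\OO(-K)$ together with its first-order jet; $P_{sing}$ is then the image of $\wt P_{sing}$ under the proper projection to $P$; $P_{nr}$ is closed by properness of the relative Hilbert scheme of nontrivial subdivisors; and $P_{cusp}$ is closed because nodality (nondegeneracy of the Hessian at a singular point) is an open condition on singularities in a family. For irreducibility and dimension of $\wt P_{sing}$, project to $X$: since $|-K|$ is very ample for $k \ge 3$, the evaluation $H^0(\OO(-K)) \to \OO(-K) \otimes \OO_X/\fm_p^2$ is surjective at every $p$, so each fiber $F_p = \P(H^0(\OO(-K) \otimes \fm_p^2))$ is a projective space of constant dimension $k-3$. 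Thus $\wt P_{sing}$ is a $\P^{k-3}$-bundle over $X$, irreducible of dimension $k-1$, and its image $P_{sing}$ is irreducible; Lemma \ref{sing-pt-lem} implies the projection $\wt P_{sing} \to P_{sing}$ is generically finite (generic singular divisors are irreducible with a single singularity), giving $\dim P_{sing} = k-1$.

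\textbf{Plan for (ii).} The inequality $P_{nr} \neq P_{sing}$ is immediate from Lemma \ref{sing-pt-lem}. For $P_{cusp} \neq P_{sing}$, I consider the tangent-cone morphism $\tau : \wt P_{sing} \to \P(\operatorname{Sym}^2 \Omega^1_X)$ over $X$, sending $(C, p)$ to the class of the tangent cone of $C$ at $p$; the non-nodal locus corresponds to $\tau^{-1}(V)$, where $V$ is the Veronese conic bundle of squares. Combined with the generic finiteness from part (i), the assumption $P_{cusp} = P_{sing}$ would force $\tau$ to factor generically through $V$. I then argue separately when $k \ge 4$ and $k = 3$. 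For $k \ge 4$, a generic pencil $L \subset |-K|$ has $k$ base points in general position; blowing them up yields a smooth rational surface $\tilde X$ with $K_{\tilde X}^2 = 0$ and a genus-one fibration $\tilde X \to \P^1$. Since $|-K|$ is a very ample, hence separable, embedding, Bertini guarantees the generic member of $L$ is smooth, so the fibration is genuinely elliptic (not quasi-elliptic); the Euler-characteristic count $\chi(\tilde X) = 12$ then produces $12$ singular fibers generically of type $I_1$, giving a nodal anticanonical divisor on $X$.

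\textbf{Case $k = 3$ and main obstacle.} When $k = 3$, each $F_p$ is a single point, namely the tangent-plane section $X \cap T_pX$, so the hypothesis asserts every tangent-plane section is cuspidal at its singular point $p$. Fixing a blowdown $\pi : X \to \P^2$ with center $p_1, \dots, p_6$, for generic $\bar q \in \P^2$ the tangent-plane section at $\pi^{-1}(\bar q)$ corresponds to the unique plane cubic $D_{\bar q}$ through $p_1, \dots, p_6$ singular at $\bar q$, and cuspidality translates to $D_{\bar q}$ being cuspidal at $\bar q$. I would then analyze the degeneration of $D_{\bar q}$ as $\bar q \to p_i$ along a line $\ell$ through $p_i$: the limit is a member of the pencil of cubics through the six points singular at $p_i$, and its cuspidality at $p_i$ translates, after passing through the geometry of reducible members and the dual-conic picture, into a tangency condition relating $\ell$ to the conic $C_i$ through $\{p_1, \dots, p_6\} \setminus \{p_i\}$. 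As $\ell$ and $i$ vary, this reproduces the hypothesis of Lemma \ref{X3F-lem}, which forces characteristic $2$ and $X \cong X_3^F$, contradicting the assumption. The principal difficulty is precisely this last translation: carrying out the degeneration analysis carefully and matching the resulting configuration condition with that of Lemma \ref{X3F-lem}. This is especially delicate in characteristic $2$, where the Hessian of the defining cubic can vanish identically (the Fermat case) and the cuspidality criterion must be extracted from higher-order jet information.
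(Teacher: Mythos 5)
Part (i) of your plan is essentially the paper's argument (jet conditions cut out a $\P^{k-3}$-bundle $\wt P_{sing}\to X$, the image is closed and irreducible; the paper handles closedness of $P_{nr}$ by listing the finitely many possible classes of components rather than via the Hilbert scheme, but both work). The problems are in part (ii). For $k\ge 4$ your argument that a generic anticanonical pencil yields a rational elliptic surface with ``$12$ singular fibers generically of type $I_1$'' has a genuine gap: the equality $\chi_{top}(\wt X)=12$ does \emph{not} force the existence of a nodal ($I_n$-type) fiber, since rational elliptic surfaces with only additive singular fibers exist (e.g.\ configurations $IV^*+IV$ or $II^*+II$ already give Euler number $12$), and in positive characteristic wild ramification can further distort the count. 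The assertion that a generic pencil is Lefschetz (general singular member has a single node) is precisely the statement $P_{cusp}\neq P_{sing}$ that you are trying to prove, and it is characteristic-sensitive --- it genuinely fails for the Fermat cubic in characteristic $2$, which is why the exception appears in the lemma. The paper sidesteps all of this by exhibiting an explicit \emph{reducible} nodal anticanonical divisor: for $k\ge4$, the proper transform of a triangle of three lines in $\P^2$ passing through the $\le 5$ blown-up points but missing the vertices.

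For $k=3$ your reduction to the hypothesis of Lemma \ref{X3F-lem} via degenerating the singular tangent-plane cubic $D_{\bar q}$ as $\bar q\to p_i$ is only a sketch, and you yourself identify the crucial translation (cuspidality of the limit $\leftrightarrow$ tangency of $\ell$ to $C_i$) as not carried out; as stated it does not constitute a proof. The paper's route is again via a reducible divisor: take the conic $C$ through $p_1,\dots,p_5$ and a line $\ell_1$ through $p_1,p_6$ not tangent to $C$ (such a line exists unless the projection of $C$ from $p_6$ is purely inseparable), and observe that $e_1\cup\wt C\cup\wt\ell_1$ is a nodal anticanonical curve; the failure case for all choices of points is exactly the hypothesis of Lemma \ref{X3F-lem}. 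I would recommend replacing both halves of your part (ii) with explicit reducible nodal anticanonical divisors along these lines, or else supplying the missing Lefschetz-type and degeneration arguments in full, with care in characteristic $2$.
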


\begin{proof}
(i) Since $-K_X$ is very ample, for a fixed point $p\in X$, the condition that an anticanonical divisor $C$ is singular at $p$ is given by three independent linear conditions
on a point in $P$. Hence, $\wt{P}_{sing}$ is closed in $P\times X$ and is a projective bundle over $X$,
with fibers $\P^{k-3}$, so $\wt{P}_{sing}$ is irreducible of dimension $k-1$. The subset $P_{sing}\sub P$ is the image of $\wt{P}_{sing}$, so it is closed.

Reducible anticanonical divisors of the form $C_1+C_2$, with fixed rational equivalence classes of $C_1$ and $C_2$, are in the image of the map
$|C_1|\times |C_2|\to P$, so they form a closed subset. We claim that there is finitely many possibilities for rational equivalence classes of $C_i$. 
Indeed, we can assume that neither $C_1$ or $C_2$ is a $(-1)$-curve. Then representing $X$ as a blow up of a set of points $S\sub \P^2$,
we get that $C_1+C_2$ is the proper transform of a reducible cubic passing through $\P^2$, so the class of one of the components is either $h-e_i$ or $h-e_i-e_j$.
This implies the claim and proves that $P_{red}$ is closed.
It is well known that the locus of curves in $P$ with at most nodal singularities is open, hence, $P_{cusp}$ is closed.

\noindent
(ii) Lemma \ref{sing-pt-lem} shows that $P_{red}\neq P_{sing}$.
It remains to prove existence of a nodal anticanonical divisor $C\sub X$ under our assumptions.
For $k\ge 4$, $X$ can be realized as the blow up of $\P^2$ in a set $S$ of $\le 5$ points (in general linear position), so we can take as $C$ the proper transform of 
$\ov{C}=L_1\cup L_2\cup L_3$, the nodal union of three lines in $\P^2$, so that $S\sub \ov{C}$ but none of the nodes of $\ov{C}$ is in $S$. For example, if
$S=\{p_1,\ldots,p_5\}$, we can take $L_1$ to be the line through $p_1,p_2$, $L_2$ the line through $p_3,p_4$, and $L_3$ a generic line through $p_5$.

%In the case $k=4$, we can take the nodal union of four $(-1)$-curves,
%$$h-e_4-e_5,h-e_2-e_3,h-e_1-e_5,e_5.$$

Now assume $k=3$ and characteristic is $\neq 2$. Let $X$ be the blow up of $\P^2$ at points $p_1,\ldots,p_6$.
Consider the (smooth) conic $C\sub \P^2$ through $p_1,\ldots,p_5$, and let $\ell_i\sub \P^2$, for $i=1,\ldots,5$, be the line through $p_i$ and $p_6$.
Note that the linear projection of $C$ from $p_6$ is a degree $2$ map $C\to \P^1$, so it has at most two ramification points, which correspond to tangent lines to $C$
passing through $p_6$. Since the five lines $\ell_1,\ldots,\ell_5$ all pass through $p_6$ and are distinct, one of them is not tangent to $C$ 
Say, $\ell_1$ is not tangent to $C$. Therefore, $C\cap \ell_1$ consists of two points, $p_1$ and $q$, where $q$ is distinct from $p_i$.
Consider the proper transforms $\wt{C}$ and $\wt{\ell}_1$ of $C$ and $\ell_1$ in $X$. Then $e_1\cup \wt{C}\cup \wt{\ell}_1$ is the nodal anticanonical curve.

In the case of $k=3$ and characteristic $2$, the only case when the above argument does not go through is when the linear projection of $C$ from each point $p_6$ is purely 
inseparable, i.e., every line through $p_6$ is tangent to $C$, and similarly for other points $p_i$ instead of $p_6$. By Lemma \ref{X3F-lem}, this implies
that $X\simeq X_F^3$.
%Consider 
%the linear system $|Q-e_6|=|3h-e_1-e_2-e_3-e_4-e_5-2e_6|$. We claim that it is in fact a pencil.???
%Then for every $D\in |Q-e_6|$,  $D\cdot e_6=2$. Get a degree $2$ map $e_6\to \P^1$. If characteristic $\neq 2$,
%there exists $D$ such that $D\cap e_6$ consists of two distinct points. Then $D\cup e_6$ is a nodal anticanonical curve.
\end{proof}
%Existence of a smooth anticanonical divisor for degree 2 in characteristic $2$ or $3$???

%Fibers of $\wt{P}\to P$ are singular points of a curve.
%$\wt{P}_{sing}$ is $k-1$.
%Do we get components of $\wt{P}_{sing}$ from non-reduced curves???

\begin{prop}\label{integral-curve-prop}
Let $X$ be a del Pezzo surface of degree $k\ge 3$, such that $X\not\cong X^F_3/\bar{{\mathbb F}}_2$.
%the characteristic is $\neq 2$, or $X\neq X_3^F$.
Then there exists a nonempty open subset $U\sub X$, such that for every $q\in U$
there exists an integral nodal anticanonical curve $C\sub X$ such that $q$ is the node of $C$.
\end{prop}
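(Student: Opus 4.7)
The plan is to combine Lemma \ref{sing-locus-lem} with a generic-fiber-dimension argument for the projection $\pi\colon \wt{P}_{sing}\to X$ to the second factor. By Lemma \ref{sing-locus-lem}(i), $\pi$ realizes $\wt{P}_{sing}$ as a $\P^{k-3}$-bundle over $X$, in particular an irreducible variety of dimension $k-1$. Let $\wt{Z}\sub \wt{P}_{sing}$ denote the preimage, under the first projection, of the closed subset $P_{nr}\cup P_{cusp}\sub P_{sing}$. A pair $(C,q)\in\wt{P}_{sing}\setminus\wt{Z}$ is precisely an integral curve with at worst nodal singularities together with a singular point $q\in C$; since $C\in|{-K}|$ has arithmetic genus $1$, any such $C$ has a unique node, namely $q$. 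It therefore suffices to exhibit a nonempty open set $U\sub X$ disjoint from $\pi(\wt{Z})$, for then each $q\in U$ admits an integral nodal anticanonical curve with node at $q$, as required.

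First I would bound $\dim\wt{Z}$. The first projection $\wt{P}_{sing}\to P_{sing}$ is surjective and generically one-to-one, since a generic element of $P_{sing}$ is an integral curve with a unique nodal singularity. Combining this with Lemma \ref{sing-locus-lem}(ii) and the hypothesis $X\not\cong X_3^F/\bar{\mathbb F}_2$, we get that $P_{nr}\cup P_{cusp}$ is a proper closed subset of $P_{sing}$, so $\wt{Z}$ is a proper closed subset of $\wt{P}_{sing}$, hence $\dim\wt{Z}\le k-2$.

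Next I would run the theorem on fiber dimensions on each irreducible component $W\sub\wt{Z}$. If $\pi|_W$ is not dominant, then $\overline{\pi(W)}\subsetneq X$. If $\pi|_W$ is dominant, then over a dense open $U_W\sub X$ the fibers of $\pi|_W$ have dimension equal to $\dim W-\dim X\le k-4$. Intersecting all such $U_W$ and removing the proper closed images of the non-dominant components produces a dense open $U\sub X$ over which $\wt{Z}\cap\pi^{-1}(q)$ has dimension at most $k-4$. Since this is strictly less than $k-3=\dim\pi^{-1}(q)$, for every $q\in U$ the complement $\pi^{-1}(q)\setminus\wt{Z}$ is nonempty, delivering the desired integral nodal anticanonical curve with node at $q$. (For $k=3$ the ``dominant'' case of the trichotomy cannot occur since $k-4=-1$, and the conclusion $\pi(\wt{Z})\subsetneq X$ is automatic from the dimension bound $\dim\wt{Z}\le 1<2=\dim X$.)

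The heart of the argument is entirely contained in Lemma \ref{sing-locus-lem}(ii), which already required the delicate construction of even a single integral nodal anticanonical divisor on $X$ and is where the Fermat cubic in characteristic $2$ must be excluded. Granted that input, the present statement is a routine incidence-variety argument; the sole numerical ingredient is the inequality $k-4<k-3$, which is what allows a generic $\pi$-fiber to escape $\wt{Z}$.
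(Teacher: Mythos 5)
Your proof is correct and follows essentially the same route as the paper's: both arguments rest on the irreducibility of $\wt{P}_{sing}$, the properness of the locus of non-integral or non-nodal curves supplied by Lemma \ref{sing-locus-lem}, and the dominance of the projection $\wt{P}_{sing}\to X$. You merely replace the paper's one-line observation that a dominant morphism restricted to a dense open subset of an irreducible variety remains dominant with an explicit fiber-dimension count; there is no gap.
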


\begin{proof}
%Suppose $P_{sing}$ irreducible. 
By Lemma \ref{sing-locus-lem}, integral nodal curves are dense in $P_{sing}$. 
%so enough to check that singular points of anticanonical divisors are dense.
By Lemma \ref{sing-pt-lem}, the projection $\wt{P}_{sing}\to X$ is dominant.
Hence, the restriction to the non-empty open subset of $(C,p)\in \wt{P}_{sing}$ with $C$ integral nodal is also dominant.
\end{proof}

\begin{lemma}\label{blow-up-lem}
Let $X$ be a weak del Pezzo surface with $K_X^2>1$, $C\sub X$ an irreducible anticanonical divisor, $p\in C$ a smooth point. Then the blow up $\wt{X}$ of $X$ at $p$
is still a weak del Pezzo surface, the proper transform $\wt{C}\sub \wt{X}$ of $C$ is irreducible, and the projection $\wt{C}\to C$ is an isomorphism.
\end{lemma}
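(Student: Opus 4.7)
The plan is to establish the three claims (weak del Pezzo, irreducibility, and the isomorphism $\wt C\to C$) by first computing $\wt C$ as a divisor class on $\wt X$, then verifying nefness and bigness of $-K_{\wt X}$ by a direct case check on irreducible curves.

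First I would identify $\wt C$. Let $\pi:\wt X\to X$ denote the blow-up with exceptional divisor $E$. Because $p$ is a smooth point of $C$, the total transform decomposes as $\pi^*C=\wt C+E$ (with $E$ appearing with multiplicity $1$), and combined with $K_{\wt X}=\pi^*K_X+E$ and $C\sim -K_X$ this yields
\[
\wt C\;\sim\;\pi^*(-K_X)-E\;\sim\;-K_{\wt X}.
\]
Since the strict transform of a curve at a smooth point of that curve is just the blow-up of the curve itself at a smooth point, which is an isomorphism, the induced morphism $\wt C\to C$ is an isomorphism; in particular $\wt C$ is irreducible.

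Next I would verify nefness of $-K_{\wt X}$. For $Z=E$ one has $-K_{\wt X}\cdot E=1$. Every other irreducible curve $Z$ is the strict transform $\pi^*D-mE$ of some irreducible $D\subset X$ with $m=\operatorname{mult}_p(D)\ge 0$, giving $-K_{\wt X}\cdot Z=C\cdot D-m$. If $D$ avoids $p$, then $m=0$ and this is nonnegative by nefness of $-K_X$. If $D$ passes through $p$ and $D\neq C$, the standard local intersection inequality $(C\cdot D)_p\ge \operatorname{mult}_p(C)\operatorname{mult}_p(D)=m$, valid because $C$ is smooth at $p$, yields $C\cdot D\ge m$. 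Finally, if $D=C$ we get $K_X^2-1>0$ by the degree hypothesis.

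Bigness of $-K_{\wt X}$ then follows from $(-K_{\wt X})^2=K_X^2-1>0$ together with the nefness just established. The only step that requires real thought is the nefness check for curves through $p$, and the main obstacle there is the multiplicity inequality; but since $p$ is smooth on $C$, this reduces to the standard local bound $(C\cdot D)_p\ge \operatorname{mult}_p(D)$, so the obstruction is mild.
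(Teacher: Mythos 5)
Your proof is correct, but it takes a genuinely different route from the paper. The paper disposes of the weak del Pezzo claim in one line by citing Dolgachev's criterion (Prop.\ 8.1.23 of \cite{Dolg}): the blow-up of a weak del Pezzo of degree $>1$ at a point is again a weak del Pezzo iff the point lies on no $(-2)$-curve; since $C\cdot C'=0$ for any $(-2)$-curve $C'$ and $C$ is irreducible and not itself a $(-2)$-curve, $C$ is disjoint from every $(-2)$-curve, so the smooth point $p\in C$ qualifies. You instead verify nefness and bigness of $-K_{\wt{X}}$ from scratch, checking $-K_{\wt{X}}\cdot Z\ge 0$ case by case ($Z=E$; $Z$ the strict transform of $D$ with $D$ missing $p$, with $D\ni p$ and $D\ne C$ via $(C\cdot D)_p\ge \operatorname{mult}_p D$, and $D=C$ using $K_X^2>1$), and then use nef plus positive self-intersection for bigness. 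All of these steps are sound; the only cosmetic quibble is that the inequality $(C\cdot D)_p\ge \operatorname{mult}_p(C)\operatorname{mult}_p(D)$ holds for any two curves with no common component through $p$ --- smoothness of $C$ at $p$ is only used to make the right-hand side equal to $m$. What your approach buys is self-containedness (no appeal to the classification-style criterion in \cite{Dolg}) and an explicit identification $\wt{C}\sim -K_{\wt{X}}$, which the paper leaves implicit; what the paper's approach buys is brevity. You also explicitly handle the irreducibility of $\wt{C}$ and the isomorphism $\wt{C}\to C$ (blow-up at a smooth point of the curve), which the paper treats as immediate.
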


\begin{proof}
By \cite[Prop.\ 8.1.23]{Dolg}, it is enough to check that $p$ does not lie on any $(-2)$-curve $C'$. But this immediately follows from $C\cdot C'=0$.
\end{proof}

\begin{cor}\label{non-isotrivial-cor}
Under the assumptions of Proposition \ref{integral-curve-prop},
every anticanonical curve $C_0\sub X$ is contained in a non-isotrivial pencil of anticanonical curves.
\end{cor}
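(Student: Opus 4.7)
The plan is to produce an anticanonical pencil $\ell\subset P:=|Q|\cong\P^k$ passing through $[C_0]$ whose generic member is smooth and which contains an integral nodal member. Such a pencil is automatically non-isotrivial: after resolving the base locus, the induced $j$-invariant map $\P^1\to\P^1$ takes a finite value on smooth members and $\infty$ on the nodal one, hence is non-constant.

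The key input is Lemma \ref{sing-locus-lem}. Part (i) says $P_{sing}$ is an irreducible hypersurface in $P$. Under the hypothesis $X\not\cong X_3^F/\bar{\mathbb F}_2$, part (ii) together with irreducibility forces $P_{nr}$ and $P_{cusp}$ to be proper closed subsets of $P_{sing}$, hence of dimension $\le k-2$. Consequently the locus
$$P_{sing}^{\circ}:=P_{sing}\setminus(P_{nr}\cup P_{cusp})$$
of integral nodal anticanonical curves is dense open in $P_{sing}$.

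Pencils through $[C_0]$ are parametrized by the projective space $L_{[C_0]}\cong\P^{k-1}$ of lines through $[C_0]$ in $P$. Call $\ell\in L_{[C_0]}$ \emph{good} if $\ell\not\subset P_{sing}$ and $\ell\cap P_{sing}^{\circ}\not\subset\{[C_0]\}$. I will show the bad locus has dimension $\le k-2$, so a good line exists. The set $S_1:=\{\ell:\ell\subset P_{sing}\}$ is proper closed in $L_{[C_0]}$: if every line through $[C_0]$ lay in $P_{sing}$ then $P_{sing}$ would cover all of $P$, contradicting properness; hence $\dim S_1\le k-2$ by irreducibility of $L_{[C_0]}$. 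A bad line $\ell\not\subset P_{sing}$ has $\ell\cap P_{sing}$ finite and contained in $\{[C_0]\}\cup(P_{nr}\cup P_{cusp})$; so it either satisfies $\ell\cap P_{sing}\subset\{[C_0]\}$ (a proper closed subset of $L_{[C_0]}$ by the same argument, since $P_{sing}\setminus\{[C_0]\}\neq\emptyset$ as $\dim P_{sing}\ge 2$) or passes through a point of $(P_{nr}\cup P_{cusp})\setminus\{[C_0]\}$ (parametrized by that set, hence of dimension $\le k-2$). Summing, the bad locus has dimension at most $k-2<k-1=\dim L_{[C_0]}$, so a good pencil exists.

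The main delicacy to address is the case when $[C_0]$ itself lies in $P_{sing}$, so that one might a priori worry about pencils being forced to lie inside $P_{sing}$ or meet $P_{sing}$ only at $[C_0]$. Both worries are handled by the same crude observation: $P_{sing}$ being a proper hypersurface in $P$ prevents any single point $[C_0]$ from being the "vertex" through which all lines go into $P_{sing}$, which is exactly what makes the relevant subsets $S_1$ and its analogue proper closed in $L_{[C_0]}$.
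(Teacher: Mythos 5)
Your route is genuinely different from the paper's: you try to find a good pencil by a dimension count inside $P=|Q|$, whereas the paper picks a specific integral nodal curve $C$ whose \emph{node avoids $C_0$} (using the pointwise statement of Proposition \ref{integral-curve-prop}), takes the pencil $\langle C_0,C\rangle$, and reduces to a minimal elliptic surface with an $I_1$ fibre. Unfortunately your dimension count has a genuine gap at the step bounding $\{\ell:\ell\cap P_{sing}\subset\{[C_0]\}\}$. The "same argument" you invoke only shows this set is a \emph{proper} subset of $L_{[C_0]}$; it is a priori only constructible (it is the complement of the image of $P_{sing}\setminus\{[C_0]\}$ under projection from $[C_0]$, and that image need not be open), and a proper constructible subset of $\P^{k-1}$ can perfectly well have dimension $k-1$. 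This is not a pedantic point: if $P_{sing}$ happened to be a cone with vertex $[C_0]$, then every line through $[C_0]$ would either lie in $P_{sing}$ or meet it only at $[C_0]$, so \emph{every} pencil through $[C_0]$ would be bad and your argument would produce nothing. This degenerate case only arises when $C_0$ is singular, which is exactly the case needed for the application to Theorem B. Excluding it requires extra input: in characteristic $0$ one could argue via reflexivity of the dual variety $P_{sing}=X^\vee$ (a cone with vertex $[H_0]$ would force $X\subset H_0$), but reflexivity can fail in positive characteristic, which is precisely the setting where the paper is being careful (cf.\ the Fermat cubic in characteristic $2$). The paper's use of the pointwise control on the location of the node is what sidesteps this entirely.

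A secondary, fixable issue: your non-isotriviality conclusion assumes the nodal member contributes a fibre with $j=\infty$ after resolving the base locus, but you have not arranged for the node of that member to avoid the base locus (i.e.\ to avoid $C_0$). If the node is a base point, the fibre over that member is no longer the integral nodal curve; one must check it remains of multiplicative type $I_n$ (it does, since one only ever blows up smooth points or nodes of a nodal cycle, but this needs to be said). The paper avoids the issue by choosing $C$ with node off $C_0$, so that all base points are smooth points of $C$ and Lemma \ref{blow-up-lem} applies directly, leaving an honest $I_1$ fibre to which Tate's algorithm applies.
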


\begin{proof}
By Proposition \ref{integral-curve-prop}, there exists an integral nodal anticanonical curve $C\sub X$. In addition, 
%if $k\ge 4$, 
we can assume that the node of $C$ is not contained in $C_0$. We claim that then the pencil $\lan C_0,C\ran$ is non-isotrivial.

Indeed, applying Lemma \ref{blow-up-lem} to blowing up points $C\cap C_0$, and replacing $C_0$ and $C$ by their proper transforms,
we reduce to the case when $X$ is a weak del Pezzo surface of degree $1$. Then blowing up the unique point in $C_0\cap C$
gives a minimal elliptic surface,
such that the proper transform of $C$ is still integral nodal. By Tate's Algorithm (see e.g. \cite[Sec.\ IV.9]{Silverman}), this implies non-isotriviality.
\end{proof}

\subsection{Compatible Poisson brackets}

%Recall stuff???

\begin{proof}[Proof of Theorem B]
Start with an elliptic curve $C$. It is easy to see that for every $k\ge 4$, $C$ can be realized as an anticanonical divisor on a del Pezzo surface $X_k$ of degree $k$
(where in the case $k=8$ we can take $X_8=F_1$). Namely, start with an embedding $C\sub \P^2$ and $5$ generic points $p_1,\ldots,p_5\in C$, so that no three are collinear.
Then the blow up $X_4$ of $\P^2$ at these $5$ points is a del Pezzo surface of degree $4$, and $C$ lifts to an anticanonical curve of $X_4$. By blowing up a subset of these $5$ 
points, we can embed $C$ into $X_k$ with $k\ge 4$.

Let $(\OO,V)$ be an exceptional pair of vector bundles on $X_k$, where $\rk(V)=r$ and $\deg(V)=d>r+1$.
Applying \cite[Thm.\ 4.4(i)]{HP-bih}, we get a linear map 
$$\kappa:H^0(X_k,Q)\to H^0(\P^{d-1},{\bigwedge}^2 T),$$ 
so that every element corresponding to a smooth anticanonical divisor $D$ maps to the corresponding Poisson bracket of type $q_{d,r}(D)$, and all the brackets
in the image of $\kappa$ are compatible.
Furthermore, by \cite[Thm.\ 4.4(ii)]{HP-bih}, $\kappa$ is injective provided every singular anticanonical divisor on $X_k$ extends to a non-isotrivial anticanonical pencil.
By Corollary \ref{non-isotrivial-cor} the latter condition is always satisfied since $k\ge 4$.

Now part (i) follows from Theorem A. Namely, we can find $(\OO,V)$ on $X_4$ with given $(d,r)$. Then $\kappa(C)$ is exactly $q_{d,r}(C)$, and the image of $\kappa$
is the required linear subspace of dimension $\dim H^0(X_4,Q)=5$. In the case $r=2$ or $r=d-2$, we first descend $(\OO,V)$ to a degree $5$ del Pezzo $X_5$, see
Remark \ref{exc-bun-descent-rem}.
 
Similarly, part (ii) follows from Proposition \ref{higher-deg-dP-prop} (applied to $(-d,r)$ since this Proposition is formulated in terms of the dual pair $(V^\vee,\OO)$).
Note that in the case $k=8$ we take $X_8=F_1$, so that indeed any $(d,r)$ in the given range can be realized on $X_8$.

For part (iii), we use the well known linear map 
$$s:H^0(\P^{d-1},{\bigwedge}^2 T)\to H^0(\A^d, {\bigwedge}^2 T)^{\G_m}$$
sending Poisson brackets on $\P^{d-1}$ to quadratic Poisson brackets on $\A^d$ (see \cite{Bondal}, \cite[Sec.\ 12]{P-Pois}).
This gives a lift of a linear subspace of compatible Poisson brackets on $\P^{d-1}$ to a similar subspace on $\A^d$.
Recall also that by \cite[Thm.\ 6.12]{HP-bos}, a nonzero FO bracket $\pi$ on $\P^{d-1}$ admits no nonzero Poisson vector fields.
Hence by \cite[Thm.\ 12.1]{P-Pois}, a nonzero FO bracket admits a unique lifting to a quadratic Poisson bracket on $\A^d$, which proves the uniqueness statement.
\end{proof}

%Say, on dP_4, characteristic $\neq 2,3$. 
%Let $C$ be another anticanonical curve. 
%Suppose first there exists such smooth $C$ with 
%Want to show: blowing up 

\end{document}